\documentclass[a4paper,12pt,reqno]{amsart}

\usepackage[T1]{fontenc}
\usepackage[utf8]{inputenc}
\usepackage{lmodern}
\usepackage[english]{babel}

\usepackage[%
	left=2.5cm,       
	right=2.5cm,      
	top=3.5cm,        
	bottom=3.5cm,     
	heightrounded,    
	bindingoffset=0mm 
]{geometry}

\usepackage{amsmath}
\usepackage{amssymb}
\usepackage{mathtools}
\usepackage{mathrsfs}
\usepackage[normalem]{ulem}

\usepackage{microtype}

\usepackage[
colorlinks=true,
urlcolor=teal,
citecolor=magenta,
linkcolor=teal,
breaklinks=true]
{hyperref}

\usepackage[noabbrev,capitalize,nameinlink]{cleveref}

\usepackage{bookmark}


\usepackage[initials,msc-links,
]{amsrefs}


\usepackage[dvipsnames]{xcolor}
\usepackage{graphicx}
\usepackage{url}

\usepackage{enumitem}

\numberwithin{equation}{section}

\theoremstyle{plain}
\newtheorem{theorem}{Theorem}[section]
\newtheorem{lemma}[theorem]{Lemma}

\newtheorem{corollary}[theorem]{Corollary}

\theoremstyle{definition}
\newtheorem{definition}[theorem]{Definition}

\newtheorem{remark}[theorem]{Remark}


\newcommand{\di}{\,\mathrm{d}}

\newcommand{\e}{\varepsilon}

\newcommand{\N}{\mathbb{N}}

\newcommand{\R}{\mathbb{R}}
\newcommand{\sob}[1]{\mathcal{S}^{#1}}

\newcommand{\weakstarto}{\overset{\star}{\rightharpoonup}}



\DeclarePairedDelimiter{\set}{\{}{\}}

\newcommand{\closure}[2][3]{%
  {}\mkern#1mu\overline{\mkern-#1mu#2}}

\usepackage[
final
]{showlabels}

\showlabels{bib}

\allowdisplaybreaks
\overfullrule=50pt


\begin{document}

\title[On a weighted version of the BBM formula]{On a weighted version of the
BBM formula}

\author[G.~Stefani]{Giorgio Stefani}
\address[G.~Stefani]{Università degli Studi di Padova, Dipartimento di
Matematica ``Tullio Levi-Civita'', Via Trieste 63, 35121 Padova (PD), Italy}
\email{giorgio.stefani@unipd.it {\normalfont or}
giorgio.stefani.math@gmail.com}

\date{\today}

\dedicatory{Dedicated to Silvia, on the occasion of our marriage.}

\keywords{BBM formula, weight, Gamma-convergence, pointwise limit, compactness, Poincaré inequality, spectral stability, non-local energy}

\subjclass[2020]{Primary 46E35. Secondary 26A33, 26D10, 35P30}

\thanks{\textit{Acknowledgements}.
The author thanks L.~Gennaioli for carefully reading and providing valuable
suggestions on a preliminary version of the manuscript---and for a delicious
\textit{tiramisù} in Treviso.
The author is member of the Istituto Nazionale di Alta Matematica (INdAM),
Gruppo Nazionale per
l’Analisi Matematica, la Probabilità e le loro Applicazioni (GNAMPA), and has
received funding from INdAM under the INdAM--GNAMPA Project 2025 \textit{Metodi
variazionali per problemi dipendenti da operatori frazionari isotropi e
anisotropi} (grant agreement No.\ CUP\_E53\-240\-019\-500\-01), and from the
European Union -- NextGenerationEU and the University of Padua under the 2023
STARS@UNIPD  Starting Grant Project \textit{New Directions in Fractional
Calculus -- NewFrac} (grant agreement No.\ CUP\_C95\-F21\-009\-990\-001).
}

\begin{abstract}
We prove a weighted version of the Bourgain--Brezis--Mironescu (BBM) formula, both in
the pointwise and $\Gamma$-convergence sense, together with a compactness
criterion for energy-bounded sequences.
The non-negative weights need only be $L^\infty$ convergent to a bounded and
uniformly continuous limit.
We apply the BBM formula to show a Poincaré-type inequality and the stability of the first eigenvalues relative to the energies.
Finally, we discuss a non-local analogue of the weighted BBM formula.
\end{abstract}

\maketitle

\section{Introduction}

This is a spin-off of the recent work~\cite{GS25} by L.\ Gennaioli and the
author concerning \textit{sharp} conditions for the validity of the
Bourgain--Brezis--Mironescu (BBM, for short) formula~\cites{BBM01,D02,P04a}.
Our main aim is to show that the \textit{sufficiency} part of the BBM formula
proved in~\cite{GS25} remains true even if the energies under consideration are
modified in order to include some suitable \emph{weights}.
As an application of our results, we prove a Poincaré-type inequality and the stability of the first eigenvalues relative to the energies.
Finally, we also discuss the validity of a non-local analogue of the weighted BBM formula.

\subsection{Weighted functionals}

Throughout the paper, given $p\in[1,\infty)$, we consider a family of kernels 
$(\rho_k)_{k\in\N}\subset L^1_{\rm loc}(\R^N)$ such that $\rho_k\ge0$ for every
$k\in\N$ and, unless otherwise stated, we assume that  
\begin{equation}
\label{eq:ddp_p}
\sup_{R>0}
\limsup_{k\to\infty}
R^p\int_{\R^N}\frac{\rho_{k}(z)}{R^p+|z|^p}\di z<\infty
\end{equation}
and, for some $\alpha\ge0$,
\begin{equation}
\label{eq:ddp_alpha}
\nu_k=\rho_{k}\mathscr L^N\weakstarto\alpha\delta_0
\quad
\text{in 
$\mathscr M_{\rm loc}(\R^N)$ as $k\to\infty$}.
\end{equation}
Moreover, we fix a family of non-negative \emph{weights} $(w_k)_{k\in\N}\subset
L^\infty(\R^{2N})$ such that $w_k\to w$ in $L^\infty(\R^{2N})$ as $k\to\infty$
for some non-negative \emph{limit weight} $w\in C_b(\R^{2N})$.
We assume that
\begin{equation}
\label{eq:omega_w}
|w(x,y)-w(x',y')|
\le 
\omega(|x-x'|+|y-y'|),
\quad
\text{for all $x,x',y,y'\in\R^N$},
\end{equation}
for some non-decreasing modulus of continuity
$\omega\colon[0,\infty)\to[0,\infty)$ such that 
\begin{equation}
\label{eq:modcont}
\displaystyle\lim_{t\to0^+}\omega(t)=\omega(0)=0.
\end{equation}

Observe that we do \textit{not} impose any symmetry assumption on the weights
$(w_k)_{k\in\N}$ and~$w$, as one may be interested in possibly
\textit{non-symmetric} interactions~\cite{DDG24}.  

With the above notation in force, we define the \emph{weighted} functionals
$\mathscr F_{k,p}^{w_k}\colon L^p(\R^N)\to[0,\infty]$ by letting
\begin{equation}
\label{eq:F_w}
\begin{split}
\mathscr F_{k,p}^{w_k}(u)
&=
\int_{\R^N}
\int_{\R^N}
\frac{|u(x)-u(y)|^p}{|x-y|^p}
\,
\rho_k(x-y)\,w_k(x,y)\di x\di y
\\
&=
\int_{\R^N}\frac{\|u(\cdot+z)-u\|_{L^p(w_k^z)}^p}{|z|^p}\,\rho_k(z)\di z,
\end{split}
\end{equation}
for all $u\in L^p(\R^N)$ and $k\in\N$, where, for each $z\in\R^N$,
\begin{equation}
\label{eq:notation_w}
\|u\|_{L^p(w_k^z)}^p
=
\int_{\R^d}|u(x)|^p\,w_k^z(x,x)\di x
=
\int_{\R^d}|u(x)|^p\,w_{k}(x,x+z)\di x.
\end{equation}
The functionals $(\mathscr F^w_{k,p})_{k\in\N}$, as well as the norm
$\|\,\cdot\,\|_{L^p(w^z)}$, are analogously defined. 
The \emph{unweighted} functionals in~\cite{GS25}, where $w_k=w=1$ for all
$k\in\N$, are given by $(\mathscr F^1_{k,p})_{k\in\N}$.

\subsection{Local framework}
As proved in~\cite{GS25} (also refer to~\cite{DDP24} for the case $p=2$ and
to~\cite{F25} for \textit{radially symmetric} kernels), the unweighted
functionals $(\mathscr F^1_{k,p})_{k\in\N}$ converge to
\begin{equation}
\label{eq:dir}
\mathscr D_{p,w}^\mu(u)
=
\int_{\mathbb S^{N-1}}\|\sigma\cdot Du\|_{L^p}^p\di\mu(\sigma)
\end{equation}
in the pointwise and $\Gamma$-sense on $\sob{p}(\R^N)$ with respect to the
$L^p$ topology for some Radon measure $\mu\in\mathscr M(\mathbb S^{N-1})$ defined on the $(N-1)$-dimensional unit sphere
$\mathbb S^{N-1}$ if and only if the non-negative kernels $(\rho_k)_{k\in\N}$ statisfy~\eqref{eq:ddp_p} and~\eqref{eq:ddp_alpha}
(possibly, up to a subsequence). 
For the notion of $\Gamma$-convergence, refer to~\cite{GS25}*{Sec.~2.5} for an
account and to~\cites{Braides02,DalMaso93} for a complete treatment. 
Above and in the rest of the work, as in~\cite{GS25}, we set 
\begin{equation*}
{\sob{p}}(\R^N)
=
\begin{cases}
W^{1,p}(\R^N)
&
\text{for}\ p>1,
\\[1ex]
BV(\R^N)
&
\text{for}\ p=1,
\end{cases}
\end{equation*}
and, for each $u\in\sob p(\R^N)$ and $\sigma\in\mathbb S^{N-1}$, 
\begin{equation}
\label{eq:D_sigma}
\|\sigma\cdot Du\|_{L^p}^p
=
\begin{cases}
\displaystyle
\int_{\R^N}|\sigma\cdot Du|^p\di x
&
\text{for}\
p>1,
\\[3ex]
\displaystyle|\sigma\cdot Du|(\R^N)
&
\text{for}\
p=1.
\end{cases}
\end{equation}
As customary, in~\eqref{eq:D_sigma} above $Du$ denotes the distributional
gradient of $u\in {\sob{p}}(\R^N)$.
In particular, if $p=1$, then $Du$ may be a finite (vectorial) Radon measure on
$\R^N$.

We underline that the measure $\mu\in\mathscr M(\mathbb S^{N-1})$
appearing in~\eqref{eq:dir} is uniquely determined by the family $(\rho_k)_{k\in\N}$
(possibly, up to a subsequence) and can be (formally) defined as
\begin{equation}
\label{eq:mu}
\mu(E)
=
\lim_{\delta\to0^+}
\lim_{k\to\infty}
\int_E\left(\int_0^\delta\rho_k(\sigma r)\,r^{N-1}\di r\right)\di\mathscr
H^{N-1}(\sigma)
\end{equation}
for every set $E\subset\mathbb S^{N-1}$ measurable with respect to the
$(N-1)$-dimensional Hausdorff measure~$\mathscr H^{N-1}$.
We refer to~\cite{GS25}*{Lem.~2.9} for the precise construction of~$\mu$.
Because of~\eqref{eq:mu}, the nature of the limit measure $\mu$
in~\eqref{eq:dir} does \textit{not} depend on the chosen weights
$(w_k)_{k\in\N}$ and $w$.
For this reason, from now on we tacitly assume that the measure $\mu\in\mathscr
M(\mathbb S^{N-1})$ is uniquely identified by~\eqref{eq:ddp_p}
and~\eqref{eq:ddp_alpha}.

Our first main result, \cref{res:bbm_w} below, proves that~\eqref{eq:ddp_p} and~\eqref{eq:ddp_alpha} are still
\textit{sufficient} for the  pointwise and $\Gamma$-convergence of the
functionals $(\mathscr F^{w_k}_{k,p})_{k\in\N}$ in~\eqref{eq:F_w} to a weighted
version of the energy in~\eqref{eq:dir}; namely,\begin{equation}
\label{eq:dir_w}
\mathscr D_{p,w}^\mu(u)
=
\int_{\mathbb S^{N-1}}\|\sigma\cdot Du\|_{L^p(w^0)}^p\di\mu(\sigma),
\end{equation}
where $\mu\in\mathscr M(\mathbb S^{N-1})$ is as in~\eqref{eq:mu} and, for every
$\sigma\in\mathbb S^{N-1}$, as in~\eqref{eq:D_sigma},
\begin{equation}
\label{eq:D_w}
\|\sigma\cdot Du\|_{L^p(w^0)}^p
=
\begin{cases}
\displaystyle
\int_{\R^N}w^0\,|\sigma\cdot Du|^p\di x
&
\text{for}\
p>1,
\\[3ex]
\displaystyle\int_{\R^N}w^0\di|\sigma\cdot Du|
&
\text{for}\
p=1.
\end{cases}
\end{equation}
In~\eqref{eq:D_w} above, the function $w^0\in C_b(\R^N)$ is defined as
$w^0(x)=w(x,x+0)=w(x,x)$ for every $x\in\R^N$ according to the notation
introduced in~\eqref{eq:notation_w}.
Moreover, here and in the following, given any $R>0$, we define the ($L^p$
closed) subspaces
\begin{equation*}
L^p_R(\R^N)
=
\set*{u\in L^p(\R^N) : \operatorname{supp}u\subset\closure{B}_R},
\quad
\sob{p}_R(\R^N)
=
\sob{p}(\R^N)\cap L^p_R(\R^N).
\end{equation*}

\begin{theorem}[Weighted BBM formula]\label{res:bbm_w}
Let $p\in[1,\infty)$ and $(\rho_k)_{k\in\N}$ be as above.
The following hold:
\begin{enumerate}[label=(\roman*),itemsep=1ex,topsep=1ex]

\item
\label{item:bbm_w_limsup} 
if $u\in\sob{p}(\R^N)$, then
$\displaystyle\limsup_{k\to\infty}
\mathscr F_{k,p}^{w_k}(u)
\le
\mathscr D_{p,w}^\mu(u)$;

\item
\label{item:bbm_w_G-liminf}
if $(u_k)_{k\in\N}\subset L^p(\R^N)$ is such that $u_k\to u$ in $L^p(\R^N)$ as
$k\to\infty$ for some $u\in {\sob{p}}(\R^N)$ and
$\displaystyle\limsup_{k\to\infty}
\mathscr F_{k,p}^1(u_k)<\infty$ then
$\displaystyle
\liminf_{k\to\infty}
\mathscr F_{k,p}^{w_k}(u_k)
\ge 
\mathscr D_{p,w}^\mu(u)$;

\item
\label{item:bbm_w_G-liminf_R}
if $w>0$ on $\R^{2N}$, $R>0$ and $(u_k)_{k\in\N}\subset L^p_R(\R^N)$ is such
that $u_k\to u$ in $L^p(\R^N)$ as $k\to\infty$ for some $u\in\sob{p}(\R^N)$,
then $\displaystyle\limsup_{k\to\infty}
\mathscr F_{k,p}^1(u_k)<\infty$ and therefore 
$\displaystyle
\liminf_{k\to\infty}
\mathscr F_{k,p}^{w_k}(u_k)
\ge 
\mathscr D_{p,w}^\mu(u)$.
\end{enumerate}
As a consequence, as $k\to\infty$, the functionals 
$(\mathscr F_{k,p}^{w_k})_{k\in\N}$ converge to $\mathscr D_{p,w}^{\mu}$
pointwise on $\sob{p}(\R^N)$ and in the $\Gamma$-sense on $\sob{p}_R(\R^N)$ for
every $R>0$.
\end{theorem}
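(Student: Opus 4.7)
The plan is to deduce all three parts from the unweighted BBM formula of~\cite{GS25}, treating $w_k$ and $w$ as perturbations controlled by the $L^\infty$ convergence $w_k\to w$ and by the uniform continuity of $w$ encoded in~\eqref{eq:omega_w}--\eqref{eq:modcont}. The backbone of every estimate is the elementary inequality
$|w_k(x,x+z)-w^0(x)|\le \|w_k-w\|_{L^\infty(\R^{2N})}+\omega(|z|)$,
which for $|z|<\delta$ trades $w_k(x,x+z)$ for $w^0(x)$ up to an $o_{k\to\infty}(1)+\omega(\delta)$ multiplicative error of $\mathscr F^1_{k,p}$ applied to the same argument.

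For~\ref{item:bbm_w_limsup} I fix $0<\delta<R$ and split the $z$-integral into $\{|z|<\delta\}$, $\{\delta\le|z|\le R\}$, and $\{|z|>R\}$. The middle annulus vanishes as $k\to\infty$ by~\eqref{eq:ddp_alpha} together with the uniform bound $\|u(\cdot+z)-u\|_{L^p}\le|z|\|Du\|_{L^p}$, while the outer tail vanishes as $R\to\infty$ via~\eqref{eq:ddp_p} and $\|u(\cdot+z)-u\|_{L^p}\le 2\|u\|_{L^p}$. On the inner region the backbone inequality reduces the task to computing the limit of $\int_{|z|<\delta}\int|u(x+z)-u(x)|^p/|z|^p\,w^0(x)\,\rho_k(z)\di x\di z$; for $u\in C^\infty_c(\R^N)$ this is handled by the uniform Taylor expansion $|u(x+r\sigma)-u(x)|^p/r^p=|\sigma\cdot\nabla u(x)|^p+O(r)$ combined with the polar construction~\eqref{eq:mu} of $\mu$, while the extension to $u\in\sob p(\R^N)$ follows by density and the Minkowski-type inequality $\mathscr F^{w_k}_{k,p}(u)^{1/p}\le\mathscr F^{w_k}_{k,p}(u_n)^{1/p}+\|w_k\|_{L^\infty}^{1/p}\mathscr F^1_{k,p}(u-u_n)^{1/p}$, controlled uniformly in $k$ through the standard upper bound $\mathscr F^1_{k,p}(v)\le C\|v\|_{\sob p}^p$ implied by~\eqref{eq:ddp_p}.

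For~\ref{item:bbm_w_G-liminf} the assumption $\sup_k\mathscr F^1_{k,p}(u_k)<\infty$ and $\|w_k-w\|_{L^\infty}\to 0$ give $|\mathscr F^{w_k}_{k,p}(u_k)-\mathscr F^w_{k,p}(u_k)|\to 0$, and the unweighted $\Gamma$-liminf of~\cite{GS25} forces $u\in\sob p(\R^N)$. It then remains to show $\liminf_k\mathscr F^w_{k,p}(u_k)\ge\mathscr D^\mu_{p,w}(u)$; restricting to $|z|<\delta$ and applying $w(x,x+z)\ge w^0(x)-\omega(\delta)$ reduces this, up to an $O(\omega(\delta))$ correction, to the \emph{$x$-weighted} liminf $\liminf_k\int_{|z|<\delta}\int|u_k(x+z)-u_k(x)|^p/|z|^p\,w^0(x)\,\rho_k\di x\di z\ge\int_{\mathbb S^{N-1}}\int w^0|\sigma\cdot Du|^p\di x\di\mu(\sigma)+o_{\delta\to 0}(1)$, which I would prove either by slicing along each direction $\sigma$ (reducing to a one-dimensional BBM liminf with $w^0$ entering linearly) or by a partition-of-unity localization on $\R^N$ where $w^0$ is replaced by its almost-constant value on each small cube and~\cite{GS25} is applied piecewise. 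Finally,~\ref{item:bbm_w_G-liminf_R} is a soft consequence of~\ref{item:bbm_w_G-liminf}: for $u_k\in L^p_R(\R^N)$ the integrand in $\mathscr F^{w_k}_{k,p}(u_k)$ is supported on $x\in B_{R+1}$ whenever $|z|\le 1$, and on the compact set $\overline B_{R+1}\times\overline B_{R+2}$ the positivity $w>0$ together with $w_k\to w$ uniformly gives $w_k\ge c>0$ for $k$ large; splitting at $|z|=1$ and bounding the outer part by $2^{p+1}\|u_k\|_{L^p}^p\int_{|z|>1}\rho_k/|z|^p\di z$ via~\eqref{eq:ddp_p} yields $\mathscr F^1_{k,p}(u_k)\le c^{-1}\mathscr F^{w_k}_{k,p}(u_k)+C\|u_k\|_{L^p}^p$, so the boundedness hypothesis of~\ref{item:bbm_w_G-liminf} holds automatically.

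The hard step will be the \emph{$x$-weighted} unweighted-BBM liminf appearing inside~\ref{item:bbm_w_G-liminf}: the liminf in~\cite{GS25} is proved for constant unit weight, and inserting the continuous factor $w^0(x)$ forces one either to re-run the slicing/blow-up arguments therein with this extra factor carried through each one-dimensional slice, or to patch local lower bounds by a partition of unity while controlling the cross terms generated by the product rule on the cut-offs. All remaining ingredients are $\varepsilon$-level manipulations layered on top of the unweighted BBM formula and of the backbone inequality for $w_k(x,x+z)-w^0(x)$.
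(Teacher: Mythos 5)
Your reduction to the case $w_k=w$, your three-way splitting for~\ref{item:bbm_w_limsup}, your lower bound $w_k\ge c>0$ on the compact support for~\ref{item:bbm_w_G-liminf_R}, and the ``backbone inequality'' $|w_k(x,x+z)-w^0(x)|\le\|w_k-w\|_{L^\infty}+\omega(|z|)$ are all faithful to the paper's strategy. The gap is in~\ref{item:bbm_w_G-liminf}, and you flag it yourself: the ``$x$-weighted unweighted-BBM liminf'' is left as ``either by slicing\ldots{} or by a partition-of-unity localization,'' neither of which is carried out. This is not an $\varepsilon$-level gap; it is the heart of the theorem.

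The paper closes this step by a mechanism that is qualitatively different from both of your suggestions. It mollifies the \emph{sequence itself}, setting $u_k^j=u_k*\eta_j$, and controls the cost of mollification by the estimate $\|w[\eta_j]-w\|_{L^\infty}\le|B_1|\,\|\eta\|_{L^\infty}\,\omega(2/j)$, which in combination with~\eqref{eq:w_diff_F} and the hypothesis $\limsup_k\mathscr F^1_{k,p}(u_k)<\infty$ gives
\begin{equation*}
\liminf_{k\to\infty}\mathscr F_{k,p}^w(u_k)\ge\liminf_{k\to\infty}\mathscr F_{k,p}^w(u_k^j)-C_\eta\,\omega\left(\tfrac2j\right)\limsup_{k\to\infty}\mathscr F_{k,p}^1(u_k).
\end{equation*}
For fixed $j$ the functions $u_k^j$ are smooth with $\|D^2u_k^j\|_{L^p}$ bounded uniformly in $k$, and a \emph{second-order} Taylor-type estimate (Lemma~\ref{res:ftc}\ref{item:ftc2}, the weighted analogue of~\cite{GS25}*{Lem.~2.2(ii)}) then shows that $\mathscr F_{k,p}^w(u_k^j)\to\mathscr D^\mu_{p,w}(u^j)$ as $k\to\infty$. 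Finally a lower-semicontinuity lemma (Lemma~\ref{res:lsc_D_w}) lets $j\to\infty$. Your slicing and partition-of-unity ideas could conceivably be pushed through, but they face exactly the cross-term and cut-off difficulties you mention, and you have not resolved them; the mollify-then-second-order-Taylor-then-LSC device is the missing ingredient, and without it (or a worked-out substitute) the proof of~\ref{item:bbm_w_G-liminf} is incomplete. A smaller point: for~\ref{item:bbm_w_limsup} you invoke a first-order Taylor expansion for $u\in C^\infty_c$ plus density, which is fine in spirit, but the paper's Lemma~\ref{res:ftc}\ref{item:ftc1} (a fundamental-theorem-of-calculus bound with a clean $\omega(|z|)\,|z|^p\|Du\|_{L^p}^p$ error) is what makes the $\delta$-level bookkeeping uniform; if you go the density route you also need Lemma~\ref{res:bruco} to justify passing the weighted $L^p(w^0)$ norm to the limit, especially when $p=1$ and $Du$ is a measure.
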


We underline that the functionals~\eqref{eq:F_w} are not invariant under
translations, due to the presence of the weights $(w_k)_{k\in\N}$ and $w$. 
However, despite this lack of regularity, assumptions~\eqref{eq:ddp_p}
and~\eqref{eq:ddp_alpha} are still \textit{sufficient} for the validity of a
BBM formula in this case.
Besides, we observe that the assumptions~\eqref{eq:ddp_p}
and~\eqref{eq:ddp_alpha} cannot be weakened as, in fact, by the main result
of~\cite{GS25}, both ones are also \textit{necessary} for the validity of
\cref{res:bbm_w} in the unweighted case $w_k=w=1$ for all $k\in\N$.

Finally, we remark that the validity of the statements~\ref{item:bbm_w_limsup},
\ref{item:bbm_w_G-liminf} and~\ref{item:bbm_w_G-liminf_R} in \cref{res:bbm_w}
for \textit{some} measure $\mu\in\mathscr M(\mathbb S^{N-1})$ cannot imply the
conditions~\eqref{eq:ddp_p} and~\eqref{eq:ddp_alpha}, unless one imposes some
additional (strong) assumptions on the weights, at least by requiring that
$\inf_{\R^{2N}}w>0$.
Indeed, at the level of generality we are presently working, one may choose
$w_k=w$ for all $k\in\N$ and $w(x,y)=\omega(|x-y|)$ for all $x,y\in\R^N$, where
$\omega\colon[0,\infty)\to[0,\infty)$ is a bounded non-decreasing function
satisfying~\eqref{eq:modcont}.
In this case, we may rewrite 
\begin{equation*}
\mathscr F^{w_k}_{k,p}(u)
=
\int_{\R^N}
\frac{\|u(\cdot+z)-u\|_{L^p}^p}{|z|^p}\,\widetilde{\rho_k}(z)\di z
=
\widetilde{\mathscr F_{k,p}}(u)
\end{equation*}
where $\widetilde{\rho_k}=\rho_k\,\omega(|\cdot|)$ for all $k\in\N$.
Note that the functionals $(\widetilde{\mathscr F_{k,p}})_{k\in\N}$ above are
precisely the ones considered in~\cite{GS25}, so the validity of the BBM
formula (in our case, the validity of \cref{res:bbm_w}) is equivalent to the
conditions~\eqref{eq:ddp_p} and~\eqref{eq:ddp_alpha} for the family
$(\widetilde{\rho_k})_{k\in\N}$. 
In other words, by assuming~\eqref{eq:ddp_p} and~\eqref{eq:ddp_alpha}, we are
implicitly deciding the roles of the players in the game: $(\rho_k)_{k\in\N}$
play as the kernels and $(w_k)_{k\in\N}$ play as the weights.

\subsection{Compactness}

Our second main result, \cref{res:comp_w} below, is a simple compactness
criterion for sequences of functions with bounded energy.

\begin{definition}
\label{def:p-compact}
Let $p\in[1,\infty)$. 
A sequence 
$(u_k)_{k\in\N}\subset L^p(\R^N)$ is \emph{$p$-energy bounded} if 
\begin{equation*}
\sup_{k\in\N}
\big(
\|u_k\|_{L^p}
+
\mathscr F_{k,p}^1(u_k)
\big)<\infty.
\end{equation*}
Moreover,
the family $(\rho_k)_{k\in\N}$ is \emph{$p$-compact} if any $p$-energy bounded
sequence in $L^p(\R^N)$ is compact in $L^p(E)$ for every compact set
$E\subset\R^N$.
\end{definition}

\begin{theorem}[Compactness]
\label{res:comp_w}
Let $(\rho_k)_{k\in\N}$ be $p$-compact, $w>0$ on $\R^{2N}$ and $R>0$.
If $(u_k)_{k\in\N}\subset L^p_R(\R^N)$ is such that
\begin{equation*}
\sup_{k\in\N}
\big(
\|u_k\|_{L^p}
+
\mathscr F_{k,p}^{w_k}(u_k)
\big)<\infty,
\end{equation*}
then $(u_k)_{k\in\N}$ is compact in $L^p(\R^N)$.
\end{theorem}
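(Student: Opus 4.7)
The plan is to reduce \cref{res:comp_w} to the $p$-compactness of
$(\rho_k)_{k\in\N}$: once we establish the uniform bound
$\sup_{k\in\N}\mathscr F_{k,p}^1(u_k)<\infty$, \cref{def:p-compact} applied
with the compact set $\closure{B}_R$ yields compactness of $(u_k)_{k\in\N}$ in
$L^p(\closure{B}_R)$, which---since $u_k\in L^p_R(\R^N)$---is the same as
compactness in $L^p(\R^N)$. The real difficulty is that the limit weight $w$
is merely \emph{pointwise} positive and may decay at infinity, so a bound
$\mathscr F_{k,p}^1(u_k)\le C\,\mathscr F_{k,p}^{w_k}(u_k)$ cannot be
obtained from a global lower bound on the $w_k$'s. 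To circumvent this, I
would split the double integral defining $\mathscr F_{k,p}^1(u_k)$ into the
near region $\set{|x-y|\le M}$ and the far region $\set{|x-y|>M}$ for a
large but fixed $M>0$.

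For the far region, the elementary inequality
$|u_k(x)-u_k(y)|^p\le 2^{p-1}(|u_k(x)|^p+|u_k(y)|^p)$ together with Fubini's
theorem bound the corresponding piece of $\mathscr F_{k,p}^1(u_k)$ by a
multiple of $\|u_k\|_{L^p}^p\int_{|z|>M}|z|^{-p}\rho_k(z)\di z$. Since
$|z|^{-p}\le 2(M^p+|z|^p)^{-1}$ whenever $|z|>M$, one has
$\int_{|z|>M}|z|^{-p}\rho_k(z)\di z\le 2\int_{\R^N}\rho_k(z)(M^p+|z|^p)^{-1}\di z$,
and the latter has finite $\limsup$ in~$k$ by~\eqref{eq:ddp_p} (applied with
parameter~$M$).

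For the near region, the support condition $u_k\in L^p_R(\R^N)$ forces the
integrand to vanish outside the compact set
$K:=\closure{B}_{R+M}\times\closure{B}_{R+M}$. Continuity and strict
positivity of~$w$ give $c_M:=\min_K w>0$, and the $L^\infty$ convergence
$w_k\to w$ yields $w_k\ge c_M/2$ on $K$ for all large~$k$; the near-region
piece of $\mathscr F_{k,p}^1(u_k)$ is therefore dominated by
$\frac{2}{c_M}\mathscr F_{k,p}^{w_k}(u_k)$, which is uniformly bounded by
hypothesis. Combining the two estimates (and absorbing the finitely many
exceptional indices into a larger constant) gives
$\sup_{k\in\N}\mathscr F_{k,p}^1(u_k)<\infty$, and \cref{def:p-compact}
closes the argument. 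The main obstacle is precisely this interplay: the
strict positivity of~$w$ only helps on compact sets, so both the support
assumption on the $u_k$'s and the tail decay encoded in~\eqref{eq:ddp_p}
are needed in order to localize the comparison between the weighted and the
unweighted energies.
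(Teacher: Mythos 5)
Your proof is correct and follows essentially the same route as the paper: split the unweighted energy at a fixed length scale, bound the far tail by~\eqref{eq:ddp_p}, and on the near range use the support constraint to localize $(x,y)$ to a compact box where $w$ has a strictly positive minimum, transfer this lower bound to $w_k$ for all large $k$ via the $L^\infty$ convergence $w_k\to w$, and conclude by $p$-compactness on $\closure{B}_R$. The paper packages this comparison as \cref{res:ell} and splits at the fixed radius $2R$ rather than a free parameter $M$; the mechanism is identical, and your choice of compact set $K=\closure{B}_{R+M}\times\closure{B}_{R+M}$ correctly accounts for the fact that, when $|x-y|\le M$, the integrand $|u_k(x)-u_k(y)|^p$ can be nonzero precisely when \emph{both} $x$ and $y$ lie in $\closure{B}_{R+M}$, not only when $x\in\closure{B}_R$ (the paper's definition of $\ell_R$ in~\eqref{eq:ell_R} is taken over a slightly smaller set, and the justifying assertion in its proof that $z\in B_{2R}$, $x\in B_R^c$ force $x+z\in B_R^c$ is not literally true; your box is the right one).

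One small gap at the very end: you propose to ``absorb the finitely many exceptional indices into a larger constant'', but for those indices $k$ --- where either the near-range lower bound $w_k\ge c_M/2$ on $K$ or the tail control coming from~\eqref{eq:ddp_p} is not yet available --- the quantity $\mathscr F^1_{k,p}(u_k)$ is not guaranteed to be finite, so there may be no constant to absorb. The fix is immediate and is what the paper implicitly does by taking $\sup_{k\ge k_R}$: compactness of a sequence in $L^p$ is a tail property, so one may discard the finitely many initial indices (or replace those $u_k$ by $0$) before invoking \cref{def:p-compact}, and then recover compactness of the full sequence for free.
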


Examples of $p$-compact families of kernels $(\rho_k)_{k\in\N}$ can be found for instance in~\cites{BBM01,GS25,P04b,AABPT23}.
Here we only refer to~\cite{BBM01}*{Th.~5} and to~\cite{P04b}*{Ths.~1.2
and~1.3} for the case of \textit{radially symmetric} kernels, and
to~\cite{GS25}*{Th.~1.3} for possibly non-radially symmetric ones.
We also refer to~\cites{BS24,F25} for the purely non-local framework.

In the references above, the family $(\rho_k)_{k\in\N}$ is not only $p$-compact
(for some or for all~$p$), but also yields that any $L^p_{\rm loc}(\R^N)$ limit
of a $p$-energy bounded sequence is a more regular function; namely,
it may belongs to $\sob{p}(\R^N)$. 
This motivates the following terminology.

\begin{definition}
\label{def:strong_p-comp}
Let $p\in[1,\infty)$. 
The family $(\rho_k)_{k\in\N}$ is \emph{strongly $p$-compact} if it is
$p$-compact and such that any $L^p_{\rm loc}(\R^N)$ limit of a $p$-energy
bounded sequence is in $\sob{p}(\R^N)$.\end{definition}

As well-known (e.g., see~\cites{P04a,P04b}), the family of standard
\emph{fractional} kernels, defined as  
\begin{equation}
\label{eq:frac_kernels}
\rho_s(z)=\frac{1-s}{|z|^{N-(1-s)p}},
\quad
\text{for $z\in\R^N$ and $s\in(0,1)$},
\end{equation}
is an example of strongly $p$-compact family for every $p\in[1,\infty)$.

By combining \cref{res:bbm_w,res:comp_w} with \cref{def:strong_p-comp}, we
obtain the following result.

\begin{corollary}
\label{res:strong_bbm_w}
Let $p\in[1,\infty)$, $w>0$ on $\R^{2N}$ and $R>0$.
Assume that $(\rho_k)_{k\in\N}$ is a strongly $p$-compact family.
Then, the following hold:
\begin{enumerate}[label=(\roman*),itemsep=1ex,topsep=1ex]

\item
\label{item:strong_bbm_comp}
if $(u_k)_{k\in\N}\subset L^p_R(\R^N)$ is such that
\begin{equation}
\label{eq:strong_bbm_comp}
\sup_{k\in\N}
\big(
\|u_k\|_{L^p}
+
\mathscr F_{k,p}^{w_k}(u_k)
\big)<\infty,
\end{equation}
then $(u_k)_{k\in\N}$ is compact in $L^p(\R^N)$ and any of its $L^p(\R^N)$
limits is in $\sob{p}(\R^N)$;

\item
\label{item:strong_bbm_liminf}
if $(u_k)_{k\in\N}\subset L^p_R(\R^N)$ is such that $u_k\to u$ in $L^p(\R^N)$
as $k\to\infty$ for some $u\in L^p(\R^N)$, then  
$\displaystyle
\liminf_{k\to\infty}
\mathscr F_{k,p}^{w_k}(u_k)
\ge 
\mathscr D_{p,w}^\mu(u)$;

\item 
\label{item:strong_bbm_limsup}
if $u\in\sob{p}(\R^N)$, then
$\displaystyle\limsup_{k\to\infty}
\mathscr F_{k,p}^{w_k}(u)
\le
\mathscr D_{p,w}^\mu(u)$.

\end{enumerate} 
\end{corollary}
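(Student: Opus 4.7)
My plan is to observe that \ref{item:strong_bbm_limsup} is an immediate restatement of \cref{res:bbm_w}\ref{item:bbm_w_limsup}, so that the content of the corollary lies in \ref{item:strong_bbm_comp} and \ref{item:strong_bbm_liminf}. Both hinge on the same technical step: for sequences supported in $\closure{B}_R$, a uniform bound on the \emph{weighted} energy $\mathscr F^{w_k}_{k,p}(u_k)$ forces a uniform bound on the \emph{unweighted} energy $\mathscr F^{1}_{k,p}(u_k)$.

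For \ref{item:strong_bbm_comp}, the $L^p(\R^N)$-compactness of $(u_k)_{k\in\N}$ is a direct application of \cref{res:comp_w}. To promote any $L^p(\R^N)$-limit to $\sob{p}(\R^N)$, \cref{def:strong_p-comp} requires $\sup_{k}\mathscr F^{1}_{k,p}(u_k)<\infty$, which I would establish by splitting the $z$-integration in~\eqref{eq:F_w} at some threshold $M>0$. On the complementary region $\set{|z|\le M}$, the pairs $(x,x+z)$ that actually contribute to the integral lie in the compact set $\closure{B}_R\times\closure{B}_{R+M}$, on which $w$ is continuous and strictly positive and hence bounded below by some $c=c(M,R)>0$; the uniform convergence $w_k\to w$ in $L^\infty(\R^{2N})$ then gives $w_k\ge c/2$ on the same set for $k$ large, so that the unweighted integrand is controlled by $2/c$ times the weighted one. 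On the tail $\set{|z|>M}$, the crude inequality $|u_k(x)-u_k(x+z)|^p\le 2^{p-1}(|u_k(x)|^p+|u_k(x+z)|^p)$, the bound on $\|u_k\|_{L^p}$, and the estimate
\begin{equation*}
\int_{|z|>M}\frac{\rho_k(z)}{|z|^p}\di z
\le
2\int_{\R^N}\frac{\rho_k(z)}{M^p+|z|^p}\di z,
\end{equation*}
whose right-hand side is uniformly controlled by \eqref{eq:ddp_p}, furnish the missing bound. Strong $p$-compactness then places the $L^p(\R^N)$-limit in $\sob{p}(\R^N)$.

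For \ref{item:strong_bbm_liminf}, I would assume $\liminf_k \mathscr F^{w_k}_{k,p}(u_k)<\infty$ (otherwise the claim is trivial), pass to a subsequence realizing the liminf along which the weighted energies and the $L^p$-norms stay uniformly bounded, and invoke \ref{item:strong_bbm_comp} to conclude that the limit $u$ belongs to $\sob{p}(\R^N)$. The full set of hypotheses of \cref{res:bbm_w}\ref{item:bbm_w_G-liminf_R} is then in force along that subsequence and yields the desired inequality; arbitrariness of the extracted subsequence upgrades it to the full sequence.

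The only non-trivial step is the weighted-to-unweighted transfer in \ref{item:strong_bbm_comp}. This is, however, essentially the same mechanism already underlying the implication $\limsup_{k}\mathscr F^{1}_{k,p}(u_k)<\infty$ in \cref{res:bbm_w}\ref{item:bbm_w_G-liminf_R}, so I do not anticipate any genuinely new obstacle and expect the corollary to be a clean combination of \cref{res:bbm_w}, \cref{res:comp_w}, and \cref{def:strong_p-comp}.
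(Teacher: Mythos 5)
Your approach is correct and essentially mirrors the paper's, which gives no explicit proof of the corollary but simply presents it as the combination of \cref{res:bbm_w}, \cref{res:comp_w} and \cref{def:strong_p-comp}. The weighted-to-unweighted transfer you carry out by hand --- exploiting $w>0$ on a compact set near the diagonal together with the tail estimate coming from~\eqref{eq:ddp_p} --- is precisely the content of the paper's \cref{res:ell}, which is already used internally in the proofs of \cref{res:bbm_w}\ref{item:bbm_w_G-liminf_R} and \cref{res:comp_w}; and your derivation of \ref{item:strong_bbm_liminf} by first invoking \ref{item:strong_bbm_comp} to upgrade $u$ to $\sob{p}(\R^N)$ and then applying \cref{res:bbm_w}\ref{item:bbm_w_G-liminf_R} is the intended route. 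One small imprecision worth flagging: for $|z|\le M$ a pair $(x,x+z)$ contributes as soon as $x\in\closure{B}_R$ \emph{or} $x+z\in\closure{B}_R$, so the relevant compact set is $(\closure{B}_R\times\closure{B}_{R+M})\cup(\closure{B}_{R+M}\times\closure{B}_R)\subset\closure{B}_{R+M}\times\closure{B}_{R+M}$ rather than $\closure{B}_R\times\closure{B}_{R+M}$ alone; since the enlarged set is still compact, the lower bound $c=c(M,R)>0$ on $w$ holds on it, $w_k\ge c/2$ for $k$ large by $L^\infty$ convergence, and your argument goes through unchanged.
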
 

A particular case of \cref{res:strong_bbm_w} has been recently obtained by
A.~Kubin, G.~Saracco and the author in~\cite{KSS25}, with a completely
different proof, for the fractional kernels~\eqref{eq:frac_kernels} and weights
defined as $w_k(x,y)=f_k(x)\,f_k(y)$ for all $k\in\N$ and $w(x,y)=f(x)\,f(y)$,
for every $x,y\in\R^N$, where $(f_k)_{k\in\N}\subset
L^\infty(\R^N;[0,\infty))$, $f\in\mathrm{Lip}_b(\R^N;(0,\infty))$  and $f_k\to
f$ in $L^\infty(\R^N)$ as $k\to\infty$ (for the unweighted case, refer
to~\cite{CDKNP23}*{Th.~2.1} for $p=2$ and to~\cite{BPS16}*{Th.~3.1} for
$p\in(1,\infty)$).
For strictly related results, see~\cites{CCLP23,DL21} for the fractional
\emph{Gaussian} case, and~\cite{K24} for weights depending on negative powers
of the distance from the boundary of the domain of integration.

\begin{remark}[A generalization of~\cite{KSS25}*{Th.~1.2}]
\cref{res:strong_bbm_w} can be exploited to prove the stability of the gradient flows relative to the Hilbertian energies $(\mathscr F^{w_k}_{k,2})_{k\in\N}$ and $\mathscr D^\mu_{2,w}$, thus generalizing~\cite{KSS25}*{Th.~1.2} to the present setting.
Indeed, it is enough to follow the strategy outlined in~\cite{KSS25}*{Sec.~4} line by line up to the natural (minor) adaptations.
\end{remark}

\subsection{Poincaré inequality}

Motivated by~\cite{P04b}, we exploit \cref{res:strong_bbm_w}  to estalish a Poincaré-type
inequality for the energies $(\mathscr F^{w_k}_{k,p})_{k\in\N}$, see 
\cref{res:poincare_w} below.

In order to state our result, we need to introduce some notation.
From now on, we fix a (non-empty) bounded open set $\Omega\subset\R^N$ and, for
each $p\in[1,\infty)$, we define the subspaces
\begin{equation}
\label{eq:zero_spaces}
L^p_0(\Omega)
=
\set*{u\in L^p(\R^N) : u=0\ \text{a.e.\ on}\ \R^N\setminus\Omega},
\quad
\sob p_0(\Omega)
=
\set*{u\in\sob p(\R^N) : u\in L^p_0(\Omega)}.
\end{equation}
As well known, if $\Omega$ has Lipschitz regular boundary, then $\sob
p_0(\Omega)$ coincides with the closure of $C^\infty_c(\Omega)$ functions with
respect to the energy $u\mapsto\|Du\|_{L^p}$, but we do not need such
equivalence in the following.

\begin{definition}
\label{def:poincare}
Let $p\in[1,\infty)$.
The measure $\mu\in\mathscr M(\mathbb S^{N-1})$ is \emph{$(p,\Omega)$-Poincaré}
if there exists $C>0$ such that, for every $u\in\sob p_0(\Omega)$, it holds 
\begin{equation}
\label{eq:poincare_mu}
\|u\|_{L^p}^p\le C\,\mathscr D_p^\mu(u).
\end{equation}
In this case, we let $A_{p,\mu,\Omega}>0$ be the optimal constant for
which~\eqref{eq:poincare_mu} holds. 
\end{definition}

If $\mu\in\mathscr M(\mathbb S^{N-1})$ is $(p,\Omega)$-Poincaré for some
$p\in[1,\infty)$, then there exists $C>0$, depending on $A_{p,\mu,\Omega}$ and
$\inf_\Omega w^0>0$, such that 
\begin{equation}
\label{eq:poincare_mu_w}
\|u\|_{L^p}^p
\le 
C\,
\mathscr D^{\mu}_{p,w}(u)
\end{equation}
for all $u\in\sob p_0(\Omega)$.
We thus let $A_{p,\mu,\Omega,w}>0$ be the optimal constant for
which~\eqref{eq:poincare_mu_w} holds.

\begin{theorem}[Poincaré inequality]
\label{res:poincare_w}
Let $p\in[1,\infty)$.
Assume that $(\rho_k)_{k\in\N}$ is a strongly $p$-compact family and
$\mu\in\mathscr M(\mathbb S^{N-1})$ is $(p,\Omega)$-Poincaré.
Then, for every $\e>0$, then exists $k_\e\in\N$ such that 
\begin{equation*}
\|u\|_{L^p}^p
\le 
\left(A_{p,\mu,\Omega,w}+\e\right)
\,
\mathscr F_{k,p}^{w_k}(u)
\end{equation*}
for all $u\in L^p_0(\Omega)$ and  $k\ge k_\e$. 
\end{theorem}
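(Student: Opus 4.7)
The plan is to argue by contradiction and compactness, in the classical spirit of Poincaré-type constants. If the conclusion failed, there would exist $\varepsilon>0$ together with a subsequence $k_j\to\infty$ and functions $u_j\in L^p_0(\Omega)$ with $\|u_j\|_{L^p}=1$ such that $\mathscr F_{k_j,p}^{w_{k_j}}(u_j)<1/(A_{p,\mu,\Omega,w}+\varepsilon)$ for every~$j$. The goal is to extract an $L^p$ limit and use the $\liminf$ inequality of~\cref{res:strong_bbm_w} together with the optimality of $A_{p,\mu,\Omega,w}$ in~\eqref{eq:poincare_mu_w} to reach a contradiction.

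The technical heart of the argument is to upgrade the uniform bound on the weighted energy into a uniform bound on the \emph{unweighted} energy $\mathscr F_{k_j,p}^1(u_j)$, so as to enable the strong $p$-compactness assumption. I fix $R>0$ with $\Omega\subset B_R$, choose $R'>R$, and set $R_0=R'-R$. Since $w$ is continuous and strictly positive on the compact set $\closure{B}_{R'}\times\closure{B}_{R'}$ and $w_k\to w$ uniformly, there is some $c>0$ with $w_{k_j}\ge c$ on that set for all large~$j$; this controls the part of $\mathscr F_{k_j,p}^1(u_j)$ coming from pairs $(x,y)\in B_{R'}\times B_{R'}$ by $c^{-1}\mathscr F_{k_j,p}^{w_{k_j}}(u_j)$. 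For the remaining pairs, at least one of $x,y$ lies outside $\operatorname{supp} u_j\subset\closure{B}_R$, so $|u_j(x)-u_j(y)|^p$ reduces to $|u_j(x)|^p$ or $|u_j(y)|^p$; using $|z|^{-p}\le 2(R_0^p+|z|^p)^{-1}$ on $\{|z|\ge R_0\}$, this contribution is dominated by a multiple of $\|u_j\|_{L^p}^p\int_{|z|\ge R_0}\rho_{k_j}(z)/|z|^p\di z$, which is uniformly bounded in $j$ thanks to~\eqref{eq:ddp_p}. Combining the two estimates yields $\mathscr F_{k_j,p}^1(u_j)\le C$ uniformly in~$j$, so that $(u_j)$ is $p$-energy bounded.

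Since $(u_j)\subset L^p_R(\R^N)$, the strong $p$-compactness assumption then provides a further subsequence converging in $L^p(\R^N)$ to some $u\in\sob p(\R^N)$; because each $u_j$ vanishes a.e.\ outside $\Omega$, so does $u$, hence $u\in\sob p_0(\Omega)$ and $\|u\|_{L^p}=1$. Invoking~\cref{res:strong_bbm_w}\ref{item:strong_bbm_liminf} gives
\begin{equation*}
\mathscr D_{p,w}^\mu(u)
\le
\liminf_{j\to\infty}\mathscr F_{k_j,p}^{w_{k_j}}(u_j)
\le
\frac{1}{A_{p,\mu,\Omega,w}+\varepsilon},
\end{equation*}
while the optimality of $A_{p,\mu,\Omega,w}$ in~\eqref{eq:poincare_mu_w} forces $1=\|u\|_{L^p}^p\le A_{p,\mu,\Omega,w}\,\mathscr D_{p,w}^\mu(u)\le A_{p,\mu,\Omega,w}/(A_{p,\mu,\Omega,w}+\varepsilon)<1$, the desired contradiction. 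The main obstacle is the unweighted energy bound of the previous paragraph: it crucially relies on both the strict positivity of $w$ on a neighborhood of $\closure{\Omega}\times\closure{\Omega}$ and the tail decay encoded in~\eqref{eq:ddp_p}, since the weighted functional alone is not coercive enough to provide $p$-energy boundedness without these two ingredients; once the bound is in place, the rest is a direct application of~\cref{res:strong_bbm_w}.
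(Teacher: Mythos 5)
Your proof is correct and follows the same contradiction-plus-compactness strategy as the paper: normalize a hypothetical violating sequence, extract an $L^p(\R^N)$-convergent subsequence with limit $u\in\sob p_0(\Omega)$ of unit norm, apply the $\liminf$ inequality of \cref{res:strong_bbm_w}\ref{item:strong_bbm_liminf}, and contradict the optimality of $A_{p,\mu,\Omega,w}$ in~\eqref{eq:poincare_mu_w}. The one difference is that you re-derive the uniform bound on $\mathscr F^1_{k_j,p}(u_j)$ by hand, whereas the paper simply cites \cref{res:strong_bbm_w}\ref{item:strong_bbm_comp}, which already packages exactly that estimate (through \cref{res:comp_w} and \cref{res:ell}); your inline argument essentially reproduces \cref{res:ell} and is thus harmless but redundant.
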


It is worth observing that $\mu\in\mathscr M(\mathbb S^{N-1})$ is
$(p,\Omega)$-Poincaré as in \cref{def:poincare} for every $p\in[1,\infty)$ if
$\operatorname{span}(\operatorname{supp}\mu)=\R^N$, due
to~\cite{GS25}*{Lem.~2.1}.
This latter property, in turn, is ensured for example by assuming that  the
family $(\rho_k)_{k\in\N}$ has \emph{maximal rank} as
in~\cite{GS25}*{Def.~2.8}, see~\cite{GS25}*{Lem.~2.9} for the proof.
As remarked in~\cite{GS25}*{Sec.~1.2}, any radially symmetric family has
maximal rank, but non-radially symmetric families with maximal rank are known
(examples can be found in~\cite{P04a}).
In particular, as a consequence, \cref{res:poincare_w} applies to the radially symmetric family
of fractional kernels~\eqref{eq:frac_kernels}.

\subsection{Spectral stability}

Inspired by~\cite{BPS16}, we make use of \cref{res:strong_bbm_w} to also show a stability result for the first eigenvalues relative to the energies $(\mathscr F^{w_k}_{k,p})_{k\in\N}$ and~$\mathscr D^\mu_{p,w}$ and of the corresponding eigenfunctions, see \cref{res:eigen_conv} below.

Letting $p\in[1,\infty)$ and $\Omega\subset\R^N$ be as above, and keeping in mind the subspaces defined in~\eqref{eq:zero_spaces}, for each $k\in\N$ we let
\begin{equation}
\label{eq:eigeinv_k}
\lambda_{k,p}^{w_k}(\Omega)
=
\inf\set*{
\mathscr F^{w_k}_{k,p}(u) 
: u\in L^p_0(\Omega)\ \text{such that}\ \|u\|_{L^p}=1}\in[0,\infty)
\end{equation}
be the \emph{first eigenvalue} relative to the energy $\mathscr F^{w_k}_{k,p}$ on~$\Omega$, and we call any $u\in L^p_0(\Omega)$ attaining the infimum in~\eqref{eq:eigeinv_k} an \textit{eigenfunction} relative to $\lambda_{k,p}^{w_k}(\Omega)$.
Analogously, we let
\begin{equation}
\label{eq:eiegenv_w}
\lambda_{p,w}^\mu(\Omega)
=
\inf\set*{
\mathscr D_{p,w}^\mu(u) 
: u\in\sob p_0(\Omega)\ \text{such that}\ \|u\|_{L^p}=1}\in[0,\infty)
\end{equation}
be the \emph{first eigenvalue} relative to $\mathscr D^\mu_{p,w}$ on~$\Omega$, and we call any $u\in\sob p_0(\Omega)$ attaining the infimum in~\eqref{eq:eiegenv_w} an \textit{eigenfunction} relative to $\lambda^\mu_{p,w}(\Omega)$.

It is worth observing that $\lambda^{w_k}_{k,p}(\Omega)<\infty$ for all $k\in\N$ because, owing to~\eqref{eq:ddp_p}, we have that $\mathscr F^{w_k}_{k,p}(u)<\infty$ for any $u\in C^\infty_c(\Omega)$. 
Analogously, we also have that $\lambda^\mu_{p,w}(\Omega)<\infty$.
In addition, if $\mu\in\mathscr M(\mathbb S^{N-1})$ is $(p,\Omega)$-Poincaré as in \cref{def:poincare}, then $\lambda^\mu_{p,w}(\Omega)>0$ by~\eqref{eq:poincare_mu_w} and also $\lambda^{w_k}_{k,p}(\Omega)>0$ for all $k\in\N$ sufficiently large by \cref{res:poincare_w}.

\begin{theorem}[Spectral stability]
\label{res:eigen_conv}
Let $p\in[1,\infty)$ and assume that $(\rho_k)_{k\in\N}$ is a strongly $p$-compact family.
Then, it holds
\begin{equation}
\label{eq:eigenv_conv}
\lim_{k\to\infty}
\lambda_{k,p}^{w_k}(\Omega)
=
\lambda_{p,w}^\mu(\Omega).
\end{equation}
Moreover, if $u_k\in L^p_0(\Omega)$ is an eigenfunction relative to $\lambda_{k,p}^{w_k}(\Omega)$ 
for each $k\in\N$, then there exist $(u_{k_j})_{j\in\N}$ and an eigenfunction $u\in\sob p_0(\Omega)$ relative to $\lambda_{p,w}^\mu(\Omega)$ such that 
\begin{equation}
\label{eq:eigenf_conv}
\text{$u_{k_j}\to u$ in $L^p(\R^N)$ as $j\to\infty$}.
\end{equation}
\end{theorem}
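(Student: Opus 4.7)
The plan is to prove the statement via the standard mechanism by which $\Gamma$-convergence plus equi-coercivity forces the convergence of the associated infima and of their minimizers, the two ingredients being supplied by Corollary~\ref{res:strong_bbm_w}. Throughout, fix $R>0$ with $\overline\Omega\subset\overline B_R$, so that $L^p_0(\Omega)\subset L^p_R(\R^N)$ and $\sob p_0(\Omega)\subset\sob p_R(\R^N)$.

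For the $\limsup$-bound on the eigenvalues, I will pick any minimizing sequence $(v_j)_{j\in\N}\subset\sob p_0(\Omega)$ for~\eqref{eq:eiegenv_w}. Each $v_j$ is admissible in~\eqref{eq:eigeinv_k}, and the pointwise limsup inequality of Corollary~\ref{res:strong_bbm_w}\ref{item:strong_bbm_limsup} applied to it yields
\[
\limsup_{k\to\infty}\lambda_{k,p}^{w_k}(\Omega)
\le
\limsup_{k\to\infty}\mathscr F_{k,p}^{w_k}(v_j)
\le
\mathscr D_{p,w}^\mu(v_j)
\quad\text{for every } j\in\N.
\]
Sending $j\to\infty$ gives $\limsup_{k\to\infty}\lambda_{k,p}^{w_k}(\Omega)\le\lambda_{p,w}^\mu(\Omega)$.

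For the matching $\liminf$ and for the convergence of eigenfunctions, I will feed the given eigenfunctions $(u_k)\subset L^p_R(\R^N)$ directly into the compactness part of the corollary. By the upper bound just obtained, $\sup_k\mathscr F_{k,p}^{w_k}(u_k)=\sup_k\lambda_{k,p}^{w_k}(\Omega)<\infty$, and $\|u_k\|_{L^p}=1$, so assumption~\eqref{eq:strong_bbm_comp} holds. Corollary~\ref{res:strong_bbm_w}\ref{item:strong_bbm_comp} then produces a subsequence $(u_{k_j})$ converging in $L^p(\R^N)$ to some $u\in\sob p(\R^N)$; since $L^p_0(\Omega)$ is $L^p$-closed and the unit-norm constraint passes to the limit, $u\in\sob p_0(\Omega)$ with $\|u\|_{L^p}=1$. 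Applying the liminf inequality Corollary~\ref{res:strong_bbm_w}\ref{item:strong_bbm_liminf} along $(u_{k_j})$ then yields
\[
\lambda_{p,w}^\mu(\Omega)
\le
\mathscr D_{p,w}^\mu(u)
\le
\liminf_{j\to\infty}\mathscr F_{k_j,p}^{w_{k_j}}(u_{k_j})
=
\liminf_{j\to\infty}\lambda_{k_j,p}^{w_{k_j}}(\Omega).
\]
Running the same extraction along an arbitrary subsequence of $(\lambda_{k,p}^{w_k}(\Omega))$ upgrades the subsequential liminf to the full one and thus establishes~\eqref{eq:eigenv_conv}. The displayed chain then collapses to equalities, forcing $\mathscr D_{p,w}^\mu(u)=\lambda_{p,w}^\mu(\Omega)$: this identifies the $L^p$-limit $u$ as an eigenfunction for $\lambda_{p,w}^\mu(\Omega)$ and is precisely~\eqref{eq:eigenf_conv}.

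I do not expect any serious obstacle, the whole scheme being routine bookkeeping for convergence of minima under $\Gamma$-convergence. The only mildly delicate point is the invocation of Corollary~\ref{res:strong_bbm_w}\ref{item:strong_bbm_comp}, which rests on the positivity of $w$ on $\R^{2N}$ in order to turn a weighted energy bound into the unweighted one required by the definition of $p$-compactness; I take this hypothesis to be in force, as is implicit in the present setting.
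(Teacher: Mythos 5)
Your overall strategy — use the pointwise $\limsup$ bound for the upper estimate of $\lambda_{k,p}^{w_k}(\Omega)$, then use compactness plus the $\liminf$ bound to match it from below — is the same as the paper's. There is, however, a genuine logical gap in how you prove~\eqref{eq:eigenv_conv}.

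The theorem asserts~\eqref{eq:eigenv_conv} \emph{unconditionally}, and only the ``moreover'' part is stated under the extra hypothesis that an eigenfunction $u_k$ relative to $\lambda_{k,p}^{w_k}(\Omega)$ \emph{exists} for each $k$. Nothing in the standing assumptions guarantees that the infimum in~\eqref{eq:eigeinv_k} is attained (indeed, if $\rho_k/|\cdot|^p\in L^1(\R^N)$ the functional $\mathscr F^{w_k}_{k,p}$ is merely $L^p$-continuous and there is no coercivity to invoke). Your lower-bound argument ``feeds the given eigenfunctions into the compactness part,'' which silently promotes the hypothesis of the ``moreover'' clause to a standing assumption; if those minimizers do not exist, your proof of~\eqref{eq:eigenv_conv} collapses. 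The paper's proof sidesteps this by replacing the eigenfunctions with a sequence of $\tfrac1k$-near-minimizers $u_k\in L^p_0(\Omega)$, $\|u_k\|_{L^p}=1$, $\mathscr F^{w_k}_{k,p}(u_k)\le\lambda^{w_k}_{k,p}(\Omega)+\tfrac1k$, which always exist by definition of the infimum, and only afterwards treats the eigenfunction convergence as a separate step under the additional existence hypothesis. If you swap in near-minimizers for the first part, the rest of your argument (extraction along an arbitrary subsequence, closure of $L^p_0(\Omega)$, passage of the unit-norm constraint, identification of the limit as an eigenfunction via the $\liminf$ inequality, and the appeal to positivity of $w$ for Corollary~\ref{res:strong_bbm_w}\ref{item:strong_bbm_comp}) is correct and matches the paper's.
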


In the unweighted setting $w_k=w=1$ for all $k\in\N$, and assuming that $(\rho_k)_{k\in\N}$ is the family of fractional kernels~\eqref{eq:frac_kernels}, 
\cref{res:eigen_conv} corresponds to~\cite{BPS16}*{Th.~1.2} for $m=1$, the substantial (and natural) difference between the two results being that the convergence in~\eqref{eq:eigenf_conv} holds in a stronger sense in~\cite{BPS16} (besides, observe that $p>1$ in~\cite{BPS16}).

\begin{remark}[A generalization of~\cite{BPS16}*{Th.~1.2}]
\cref{res:strong_bbm_w} allows to recover~\cite{BPS16}*{Th.~1.2} also for $m\ge2$ for the unweighted energies~$(\mathscr F^1_{k,p})_{k\in\N}$ and $\mathscr D^\mu_p$ for $p\in(1,\infty)$.
Indeed, up to assuming that $\rho_k/|\cdot|^p\notin L^1(\R^N)$ for all sufficiently large $k\in\N$, one can exploit the compactness criterion proved in~\cite{BS24}*{Th.~2.11} and follow  the proof of~\cite{BPS16}*{Th.~1.2} (based on the general results achieved in~\cite{DM14}) line by line up to the natural (minor) adaptations.
\end{remark}

\subsection{Non-local framework}

We end up with a generalization of the non-local BBM
formula obtained  in~\cite{GS25}*{Ths.~1.4 and~1.5}, see \cref{res:J} below.
Here and in the following, we let $A\subset\R^N$ be an open set and 
$J_k,J\colon A^2\to[0,\infty]$, with $k\in\N$ and
$A^2=A\times A\subset\R^{2N}$, be measurable functions.
We thus let 
\begin{equation*}
\mathscr J_{k,p}(u)
=
\int_A\int_A|u(x)-u(y)|^p\,J_k(x,y)\di x \di y
\end{equation*}
for all $k\in\N$ and, similarly,
\begin{equation*}
\mathscr J_{p}(u)
=
\int_A\int_A|u(x)-u(y)|^p\,J(x,y)\di x \di y,
\end{equation*}
for every $u\in L^p(A)$, and we define
\begin{equation*}
\mathcal W^{J,p}(A)=\set*{u\in
L^p(A):\mathscr J_p(u)<\infty}.
\end{equation*}

\begin{theorem}[Weighted non-local BBM formula]
\label{res:J}
Let $p\in[1,\infty)$ and let $J_k,J\colon A^2\to[0,\infty]$, $k\in\N$, be as above.
The following hold.
\begin{enumerate}[label=(\roman*),itemsep=1ex,topsep=1ex,leftmargin=6ex]

\item
\label{item:J_comp}
Assume that, for every $\e>0$, there exists $\delta>0$ such that
\begin{equation}
\label{eq:J_comp}
\inf_{k\in\N}J_k(x,y)
\ge 
\frac{1}{\e\delta^N}
\quad
\text{for $\mathscr L^{2N}$-a.e.\ $(x,y)\in A^2$\ s.t.\ $|x-y|<\delta$}.
\end{equation} 
If $(u_k)_{k\in\N}\subset L^p(A)$ is such that 
\begin{equation}
\label{eq:J_sup_comp}
\sup_{k\in\N}
\left(\|u_k\|_{L^p(A)}+
\mathscr J_{k,p}(u_k)\right)
<\infty,
\end{equation}
then $(u_k)_{k\in\N}$ is compact in $L^p(E)$ for every compact set $E\subset A$.

\item
\label{item:J_G-liminf}
Assume that 
\begin{equation}
\label{eq:J_liminf}
\liminf_{k\to\infty}J_k\ge J
\quad
\text{$\mathscr L^{2N}$-a.e.\ in $A^2$}.
\end{equation}
If $(u_k)_{k\in\N}\subset L^p(A)$ is such that $u_k\to u$ in $L^p(A)$
as $k\to\infty$ for some $u\in L^p(A)$, then
$\displaystyle\liminf_{k\to\infty}
\mathscr J_{k,p}(u_k)\ge\mathscr J_{p}(u)$.

\item
\label{item:J_lim} 
Assume that
\begin{equation}
\label{eq:J_lim}
J=\lim_{k\to\infty}J_k
\quad
\text{$\mathscr L^{2N}$-a.e.\ on $A^2$}
\end{equation}
and that either there exist $C>0$ such that  
\begin{equation}
\label{eq:J_dom}
\sup_{k\in\N}J_k\le CJ
\quad
\text{$\mathscr L^{2N}$-a.e.\ on $A^2$},
\end{equation}
or that there exist $H_+,H_-\subset A^2$, with $\mathscr L^{2N}(H_+\cap
H_-)=0$ and $\mathscr L^{2N}(A^2\setminus(H_+\cup H_-))=0$, such that 
\begin{equation}
\label{eq:J_mon}
J_k\le J_{k+1}
\
\text{a.e.\ on $H_+$}
\quad
\text{and}
\quad
J_k\ge J_{k+1}\
\text{a.e.\ on $H_-$}
\end{equation} 
for all $k\in\N$.
If $u\in L^p(A)$ is such that $\mathscr J_p(u)<\infty$, then 
$\displaystyle\lim_{k\to\infty}
\mathscr J_{k,p}(u)
=
\mathscr J_{p}(u)$. 

\end{enumerate}
Therefore, under~\eqref{eq:J_lim} and either~\eqref{eq:J_dom} or~\eqref{eq:J_mon}, as $k\to\infty$,  the functionals $(\mathscr
J_{k,p})_{k\in\N}$ converge pointwise and in the $\Gamma$-sense to $\mathscr J$
with respect to the $L^p$ topology in $\mathcal W^{J,p}(A)$.  
\end{theorem}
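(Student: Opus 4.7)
My plan is to treat the three items of \cref{res:J} separately and then deduce the $\Gamma$-convergence statement from them.

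For~\ref{item:J_comp}, I would convert the non-local energy bound into a control on the $L^p$ modulus of continuity by mollification, so as to apply the Riesz--Fréchet--Kolmogorov criterion. Fix a compact $E\subset A$ and $\varepsilon>0$, pick $\delta>0$ according to~\eqref{eq:J_comp} with the extra condition $E+\closure{B}_\delta\subset A$, and let $\eta_\delta$ be a smooth probability mollifier supported in $B_\delta$ with $\eta_\delta\le C/\delta^N$. By Jensen's inequality,
\begin{equation*}
\|u_k-u_k*\eta_\delta\|_{L^p(E)}^p
\le
\frac{C}{\delta^N}\int_E\int_{|x-y|<\delta}|u_k(x)-u_k(y)|^p\di y\di x
\le C\varepsilon\,\mathscr J_{k,p}(u_k),
\end{equation*}
uniformly in $k$ by~\eqref{eq:J_sup_comp}. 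Since $(u_k*\eta_\delta)_{k\in\N}$ is uniformly bounded in $W^{1,p}(\R^N)$ for each fixed $\delta$, it is precompact in $L^p(E)$; a diagonal argument on $\varepsilon\to 0^+$ then yields precompactness of $(u_k)_{k\in\N}$ in $L^p(E)$.

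For~\ref{item:J_G-liminf}, I would extract a subsequence realizing the $\liminf$ and, along it, a further subsequence such that $u_k\to u$ a.e.\ on $A$. Combined with~\eqref{eq:J_liminf}, this yields
\begin{equation*}
\liminf_{k\to\infty}|u_k(x)-u_k(y)|^p J_k(x,y)
\ge
|u(x)-u(y)|^p J(x,y)
\quad\text{for a.e.\ $(x,y)\in A^2$},
\end{equation*}
and Fatou's lemma applied to this nonnegative integrand delivers the desired inequality.

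For~\ref{item:J_lim}, the $\liminf$ direction follows from~\ref{item:J_G-liminf} applied to the constant sequence $u_k\equiv u$. For the $\limsup$, under~\eqref{eq:J_dom} the integrable majorant $C|u(x)-u(y)|^p J(x,y)$ together with~\eqref{eq:J_lim} allows dominated convergence. Under~\eqref{eq:J_mon}, I would split the integral over $H_+$ and $H_-$: on $H_+$ the sequence $|u(x)-u(y)|^p J_k$ is nondecreasing with integrable pointwise limit $|u(x)-u(y)|^p J$, so classical monotone convergence applies; on $H_-$ the sequence is nonincreasing and bounded below by the same integrable limit, so monotone convergence for decreasing sequences yields the claim (after shifting the starting index to secure an integrable initial term, if needed).

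The combined $\Gamma$-convergence statement then reads off by using~\ref{item:J_G-liminf} as the $\Gamma$-$\liminf$ inequality and the constant recovery sequence from~\ref{item:J_lim} as the $\Gamma$-$\limsup$ construction. I expect the main obstacle to be the decreasing-monotone branch of~\ref{item:J_lim}, where the absence of an a priori integrable dominating term among $\set{|u-u|^p J_k}_{k\in\N}$ will require either a careful restart of the indexing or an ad hoc truncation argument to legitimately pass to the limit; by contrast, the increasing branch and the dominated case~\eqref{eq:J_dom} reduce to standard monotone and dominated convergence, and~\ref{item:J_G-liminf}~\ref{item:J_comp} are essentially Fatou and Riesz--Fréchet--Kolmogorov applied to a non-local avatar of the standard mollification estimate.
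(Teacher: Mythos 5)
Your plan matches the paper's for items~\ref{item:J_comp} and~\ref{item:J_G-liminf} and for the dominated branch of~\ref{item:J_lim}: the paper mollifies with $\eta_\delta=\chi_{B_\delta}/|B_\delta|$ and appeals to \cite{Brezis11}*{Cor.~4.28} (compactness of the image of an $L^p$-bounded family under a fixed convolution) rather than to a smooth mollifier and Rellich, but both variants go through once you extend $u_k$ by zero off $A$ before convolving, and your a.e.\ extraction plus Fatou for~\ref{item:J_G-liminf} is exactly the paper's argument.

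The reservation you yourself raise about the $H_-$ branch of~\ref{item:J_lim} is, however, a genuine gap, and neither re-indexing nor truncation closes it: decreasing monotone convergence requires \emph{some} term of the sequence to be integrable, and the hypotheses give you none. Concretely, take $N=1$, $A=(0,1)$, $p=1$, $u(x)=x$, $J\equiv1$ and $J_k(x,y)=1+\tfrac{1}{k|x-y|^{2}}$. Then~\eqref{eq:J_lim} and~\eqref{eq:J_mon} hold with $H_-=A^2$, $H_+=\emptyset$, and $\mathscr J_p(u)=\tfrac13<\infty$, yet
\begin{equation*}
\mathscr J_{k,p}(u)=\tfrac13+\frac1k\int_0^1\int_0^1\frac{\di x\di y}{|x-y|}=+\infty
\quad\text{for every $k\in\N$},
\end{equation*}
so $\lim_{k\to\infty}\mathscr J_{k,p}(u)\neq\mathscr J_p(u)$. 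The claim under~\eqref{eq:J_mon} therefore needs an additional hypothesis, for instance integrability of $(x,y)\mapsto|u(x)-u(y)|^p J_{k_0}(x,y)$ on $H_-$ for some $k_0\in\N$; note that the paper's own proof, a one-line appeal to ``the Monotone Convergence Theorem under~\eqref{eq:J_mon}'', glosses over exactly this point, so you have located a soft spot that the paper does not resolve either. Your handling of the $H_+$ branch, the dominated case~\eqref{eq:J_dom}, and the concluding $\Gamma$-convergence argument is correct.
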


\cref{res:J} may be already known to experts, but we were not able to trace it
in the literature.
Some particular instances of \cref{res:J} can be found in~\cite{AV16}*{Th.~1.1(iii)}
and~\cite{CS23}*{Th.~5.9}.
Moreover, note that the statements~\ref{item:J_G-liminf} and~\ref{item:J_lim} of  \cref{res:J} can be \textit{verbatim} stated and proved in any measure space
$(A,\mathfrak m)$, while the statement~\ref{item:J_comp} requires some extra caution.
In fact, we underline that~\eqref{eq:J_comp} is only prototypical and  can be relaxed or variated in many ways, although we do not insist on this technical point.

We exploit \cref{res:J} to obtain a weighted counterpart
of~\cite{GS25}*{Ths.~1.4 and~1.5}, see \cref{res:nonloc_bbm_w} below (in which we do \textit{not} assume~\eqref{eq:ddp_p} and~\eqref{eq:ddp_alpha}). 
Similarly as in~\cite{GS25}, given a measurable function $\kappa\colon\R^N\to[0,\infty)$, we
let
\begin{equation*}
W^{\kappa,p}_w(\R^N)  
=  
\set*{u\in L^p(\R^N) : [u]_{W^{\kappa,p}_w}<\infty},  
\end{equation*}  
where\begin{equation*}  
[u]_{W^{\kappa,p}_w}  
=  
\left(\int_{\R^N}\int_{\R^N}|u(x)-u(y)|^p\,\kappa(x-y)\,w(x,y)\di x \di
y\right)^{1/p}.  
\end{equation*}

\begin{corollary}
\label{res:nonloc_bbm_w}
Let $(\rho_k)_{k\in\N}\subset L^1_{\rm loc}(\R^N)$ be such that $\rho_k\ge0$ for all $k\in\N$.
Assume that, for every $\e>0$, there exist $\delta>0$ such that
\begin{equation}
\label{eq:nonloc_bbm_w_comp}
\inf_{k\in\N}
\frac{\rho_k}{|\cdot|^p}
\ge
\frac{1}{\e\delta^N}
\quad
\text{$\mathscr L^N$-a.e.\ on $B_\delta$}, 
\end{equation}
and that, moreover, for some $C>0$, the family $(\rho_k)_{k\in\N}$ satisfies
\begin{equation}
\label{eq:nonloc_bbm_w_ass}
\sup_{k\in\N}\frac{\rho_k}{|\cdot|^p}
\le 
C\kappa
\quad
\text{and}
\quad
\lim_{k\to\infty}\frac{\rho_k}{|\cdot|^p}=\kappa
\quad
\text{a.e.\ on $\R^N$}.
\end{equation}
Then, the following hold:
\begin{enumerate}[label=(\roman*),itemsep=1ex,topsep=1ex]

\item
\label{item:nonloc_comp}
if $(u_k)_{k\in\N}\subset L^p(\R^N)$ is such that 
\begin{equation*}
\sup_{k\in\N}
\left(
\|u_k\|_{L^p}+\mathscr F^{w_k}_{k,p}(u_k)
\right)
<\infty,
\end{equation*}
then $(u_k)_{k\in\N}$ is compact in $L^p(B_R)$ for every $R>0$ and any of its $L^p_{\rm loc}(\R^N)$ limit belongs to $W^{\kappa,p}_{w}(\R^N)$;

\item
\label{item:nonloc_G-liminf}
if $(u_k)_{k\in\N}\subset L^p(\R^N)$ is such that $u_k\to u$ in $L^p(\R^N)$ as
$k\to\infty$ for some $u\in L^p(\R^N)$, then
$\displaystyle\liminf_{k\to\infty}
\mathscr F_{k,p}^{w_k}(u_k)\ge[u]_{W^{\kappa,p}_w}^p$;

\item
\label{item:nonloc_lim}
if 
$u\in W^{\kappa,p}_w(\R^N)$, then 
$\displaystyle\lim_{k\to\infty}
\mathscr F_{k,p}^{w_k}(u)
=
[u]_{W^{\kappa,p}_w}^p$.

\end{enumerate}
As a consequence, as $k\to\infty$, the functionals $(\mathscr
F^{w_k}_{k,p})_{k\in\N}$ converge pointwise and in the $\Gamma$-sense to
$[\,\cdot\,]_{W^{\kappa,p}_w}^p$ with respect to the $L^p$ topology in
$W^{\kappa,p}(\R^N)$. 
\end{corollary}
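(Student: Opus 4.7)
The plan is to reduce all three parts to \cref{res:J} by setting $A=\R^N$ and
choosing
\[
J_k(x,y)=\frac{\rho_k(x-y)}{|x-y|^p}\,w_k(x,y),
\qquad
J(x,y)=\kappa(x-y)\,w(x,y),
\]
so that $\mathscr J_{k,p}=\mathscr F^{w_k}_{k,p}$ and
$\mathscr J_p(u)=[u]_{W^{\kappa,p}_w}^p$ for every $u\in L^p(\R^N)$. Each of the
three claims will then follow by verifying the corresponding hypothesis of
\cref{res:J} for this choice of $J_k,J$, recalling that compactness in
$L^p(E)$ for every compact $E\subset\R^N$ is equivalent to compactness in
$L^p(B_R)$ for every $R>0$.

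For \ref{item:nonloc_G-liminf}, the $L^\infty$-convergence $w_k\to w$ gives
pointwise a.e.\ convergence, which combined with $\rho_k/|\cdot|^p\to\kappa$
a.e.\ from \eqref{eq:nonloc_bbm_w_ass} yields $J_k\to J$ a.e.\ on $\R^{2N}$.
Hence \eqref{eq:J_liminf} holds and \cref{res:J}\ref{item:J_G-liminf} applies
directly.

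For \ref{item:nonloc_lim}, I would verify the domination \eqref{eq:J_dom}:
combining $\sup_k\rho_k/|\cdot|^p\le C\kappa$ from \eqref{eq:nonloc_bbm_w_ass},
the uniform bound $\sup_k\|w_k\|_{L^\infty}<\infty$, and the pointwise
estimate $w_k\le (\sup_k\|w_k\|_{L^\infty}/\inf w)\,w$ (valid when $\inf w>0$),
we obtain $\sup_k J_k\le C'J$; together with the a.e.\ convergence $J_k\to J$
already established, \cref{res:J}\ref{item:J_lim} delivers the claimed limit
for every $u\in W^{\kappa,p}_w(\R^N)$.

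For \ref{item:nonloc_comp}, I would verify \eqref{eq:J_comp}: given $\e>0$,
\eqref{eq:nonloc_bbm_w_comp} produces $\delta>0$ with
$\inf_k\rho_k(x-y)/|x-y|^p\ge 1/(\e\delta^N)$ on $\{|x-y|<\delta\}$, and
$w_k\to w$ in $L^\infty$ (combined with $\inf w>0$) forces $w_k\ge c>0$ on the
same set for every $k$ sufficiently large. Absorbing $c$ into the constant and
discarding finitely many indices (which does not affect compactness),
\eqref{eq:J_comp} holds and \cref{res:J}\ref{item:J_comp} gives compactness in
$L^p(B_R)$ for every $R>0$. That any $L^p_{\rm loc}$-limit $u$ belongs to
$W^{\kappa,p}_w(\R^N)$ is an immediate consequence of
\ref{item:nonloc_G-liminf}, since
$[u]_{W^{\kappa,p}_w}^p\le\liminf_j\mathscr F^{w_{k_j}}_{k_j,p}(u_{k_j})<\infty$
along a convergent subsequence. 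The final $\Gamma$-convergence statement is the
conjunction of \ref{item:nonloc_G-liminf} and the constant recovery sequences
provided by \ref{item:nonloc_lim}. The main technical point is that both
\eqref{eq:J_comp} and \eqref{eq:J_dom} appear to require $w$ to be uniformly
bounded away from zero---a positivity hypothesis implicitly in force here, just
as in \cref{res:comp_w,res:strong_bbm_w}; without it one would need to
localize the arguments on $\{w>0\}$.
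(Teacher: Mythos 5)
Your proof matches the paper's own argument essentially step for step: the same choices $A=\R^N$, $J_k(x,y)=\rho_k(x-y)|x-y|^{-p}\,w_k(x,y)$, $J(x,y)=\kappa(x-y)\,w(x,y)$, and the same reduction of \ref{item:nonloc_comp}--\ref{item:nonloc_lim} to the corresponding statements of \cref{res:J}. You are also right to flag that verifying \eqref{eq:J_comp} and \eqref{eq:J_dom} requires $w$ to be bounded away from zero: the corollary as stated omits any positivity hypothesis on $w$ (in contrast to \cref{res:bbm_w}\ref{item:bbm_w_G-liminf_R}, \cref{res:comp_w} and \cref{res:strong_bbm_w}), yet the paper's own proof tacitly uses it --- the displayed lower bound $\inf\{w(x,y):x\in B_R,\,y\in B_1(x)\}>0$ in the proof of \ref{item:nonloc_comp} needs $w>0$ on $\R^{2N}$ (via continuity and compactness), and the asserted ``clearly'' for \eqref{eq:J_dom} in \ref{item:nonloc_lim} needs a domination $w_k\le C'w$, which fails where $w$ degenerates. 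The only small departure is that for \ref{item:nonloc_comp} the paper localizes the lower bound on $w$ to compact sets (so pointwise positivity plus continuity suffice and the argument of \cref{res:J}\ref{item:J_comp} is then re-run on $B_R$), whereas you invoke the slightly stronger global $\inf_{\R^{2N}}w>0$; both routes succeed once some positivity is in force, which, as you observe, is the hypothesis implicitly at play throughout.
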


\section{Proofs of the results}

The rest of the paper is dedicated to the proofs of the results.
The main notation and the conventions adopted here are identical to the ones introduced in~\cite{GS25}*{Sec.~2}.

\subsection{Proof of \texorpdfstring{\cref{res:bbm_w}}{Theorem 1.1}}

\label{subsec:proof_bbm}

We begin by observing that it is enough to prove the
statements~\ref{item:bbm_w_limsup} and~\ref{item:bbm_w_G-liminf} of
\cref{res:bbm_w} only in the case $w_k=w$ for all $k\in\N$.
Indeed, this follows by combining~\cite{GS25}*{Th.~1.1} with the simple
inequality
\begin{equation}
\label{eq:w_diff_F}
\left|
\mathscr F_{k,p}^{w_k}(u)
-
\mathscr F_{k,p}^{w}(u)
\right|
\le
\mathscr F_{k,p}^1(u)\,\|w_k-w\|_{L^\infty}
\end{equation}
valid for all $k\in\N$ and $u\in L^p(\R^N)$.
Moreover, since we can rescale all the functionals by a positive constant
factor, it is not restrictive to further assume that $\|w\|_{L^\infty}=1$.

We are going to take advantage of the following  generalization
of~\cite{GS25}*{Lem.~2.2}, whose proof is briefly detailed below for the ease
of the reader.

\begin{lemma}
\label{res:ftc}
If $u\in\sob{p}(\R^N)$ and $z\in\R^N$, then the following hold:
\begin{enumerate}[label=(\roman*),itemsep=1ex,topsep=1.75ex]
\item 
\label{item:ftc1}
$\|u(\cdot+z)-u\|_{L^p(w^z)}^p
\le 
\|z\cdot Du\|_{L^p(w^0)}^p
+
\omega(|z|)\,
|z|^p\,\|Du\|_{L^p}^p$;
\item
\label{item:ftc2}
$\left|\|u(\cdot+z)-u\|_{L^p(w^z)}-\|z\cdot Du\|_{L^p(w^0)}\right|
\le 
\dfrac{|z|^2}{2}\,\|D^2u\|_{L^p}+|z|\,\omega(|z|)^{1/p}\,\|Du\|_{L^p}$.

\end{enumerate}
\end{lemma}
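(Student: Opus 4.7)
The plan is to reduce both inequalities to two well-understood ingredients: the fundamental theorem of calculus applied along the segment $[x,x+z]$ (which is the standard tool for estimating difference quotients in Sobolev spaces, as in \cite{GS25}*{Lem.~2.2}), and the uniform modulus of continuity~\eqref{eq:omega_w} of~$w$, which is needed to compare the evaluation of~$w$ at different pairs of points.

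For~\ref{item:ftc1}, I would start from the identity $u(x+z)-u(x)=\int_0^1 z\cdot Du(x+tz)\di t$, apply Jensen's inequality in the form $|u(x+z)-u(x)|^p\le\int_0^1 |z\cdot Du(x+tz)|^p\di t$, and insert this in the definition of $\|u(\cdot+z)-u\|^p_{L^p(w^z)}$. After swapping the order of integration and performing the change of variables $y=x+tz$, the weight becomes $w(y-tz,\,y+(1-t)z)$. Since the points $(y-tz,\,y+(1-t)z)$ and $(y,y)$ are at $\ell^1$-distance $t|z|+(1-t)|z|=|z|$, assumption~\eqref{eq:omega_w} gives $w(y-tz,\,y+(1-t)z)\le w^0(y)+\omega(|z|)$. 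Splitting the resulting integral and using $|z\cdot Du|\le|z|\,|Du|$ on the $\omega(|z|)$-piece yields~\ref{item:ftc1}.

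For~\ref{item:ftc2}, the strategy is a double triangle inequality:
\begin{equation*}
\bigl|\|u(\cdot+z)-u\|_{L^p(w^z)}-\|z\cdot Du\|_{L^p(w^0)}\bigr|
\le A(z)+B(z),
\end{equation*}
with
\begin{equation*}
A(z)=\bigl|\|u(\cdot+z)-u\|_{L^p(w^z)}-\|u(\cdot+z)-u\|_{L^p(w^0)}\bigr|,
\qquad
B(z)=\bigl|\|u(\cdot+z)-u\|_{L^p(w^0)}-\|z\cdot Du\|_{L^p(w^0)}\bigr|.
\end{equation*}
For $A(z)$, I would apply the elementary inequality $|a^{1/p}-b^{1/p}|\le|a-b|^{1/p}$ (valid for $a,b\ge0$ and $p\ge1$) to reduce matters to $\int|u(x+z)-u(x)|^p\,|w(x,x+z)-w(x,x)|\di x$; since $|w(x,x+z)-w(x,x)|\le\omega(|z|)$ and $\|u(\cdot+z)-u\|_{L^p}\le|z|\,\|Du\|_{L^p}$, this produces the $|z|\,\omega(|z|)^{1/p}\,\|Du\|_{L^p}$ term. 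For $B(z)$, the triangle inequality in $L^p(w^0)$ bounds it by $\|u(\cdot+z)-u-z\cdot Du\|_{L^p(w^0)}\le\|u(\cdot+z)-u-z\cdot Du\|_{L^p}$ (using $\|w\|_{L^\infty}=1$, already assumed at this stage of the argument). Taylor's formula with integral remainder gives $|u(x+z)-u(x)-z\cdot Du(x)|\le|z|^2\int_0^1(1-t)\,|D^2 u(x+tz)|\di t$, and Minkowski's integral inequality collapses this to $\frac{|z|^2}{2}\,\|D^2u\|_{L^p}$, yielding the second term of~\ref{item:ftc2}.

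No step is genuinely hard: the only subtlety is the appearance of $\|D^2u\|_{L^p}$ in~\ref{item:ftc2}, which forces the lemma to be applied either to smooth functions or after an approximation argument; this is standard and is presumably handled exactly as in~\cite{GS25}*{Lem.~2.2}, so I would not elaborate on it here.
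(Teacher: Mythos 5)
Your proposal is correct and follows essentially the same route as the paper: the same Jensen/FTC estimate and modulus-of-continuity split for~\ref{item:ftc1}, and the same double triangle inequality for~\ref{item:ftc2}, with the elementary bound $|a^{1/p}-b^{1/p}|\le|a-b|^{1/p}$ controlling the change of weight and Taylor's formula with integral remainder controlling the linearization error. The only cosmetic difference is that you invoke Minkowski's integral inequality where the paper uses Jensen's inequality against the probability measure $2(1-t)\,\mathrm{d}t$ (they give the same bound), and the reduction to smooth $u$ is indeed handled in the paper exactly as you anticipate, via an approximation lemma (\cref{res:bruco}).
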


In the proof of \cref{res:ftc} we exploit the following result.
Here and below, we let $(\eta_j)_{j\in\N}\subset C^\infty_c(\R^N)$,
$\eta_j=j^N\eta(j\,\cdot\,)$ for all $j\in\N$, be a family of non-negative
mollifiers, with $\eta\in C^\infty_c(\R^N)$ such that $\eta\ge0$,
$\operatorname{supp}\eta\subset B_1$ and $\int_{\R^N}\eta\di x=1$.

\begin{lemma}
\label{res:bruco}
If $z\in\R^N$, $u\in\sob{p}(\R^N)$ and $u^j=u*\eta_j$ for all $j\in\N$, then
\begin{equation}
\label{eq:bruco}
\lim_{j\to\infty}
\|z\cdot Du^j\|_{L^p(w^0)}^p
= 
\|z\cdot Du\|_{L^p(w^0)}^p.
\end{equation} 
\end{lemma}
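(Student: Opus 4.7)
The plan is to split according to whether $p>1$ or $p=1$, since the distributional gradient has different natures in the two cases. For $p>1$, the argument is routine: $Du^j=Du*\eta_j\to Du$ strongly in $L^p(\R^N;\R^N)$ by the standard mollifier estimate, so $z\cdot Du^j\to z\cdot Du$ in $L^p(\R^N)$. Since $w^0\in C_b(\R^N)$ is bounded and non-negative, the weighted norm $\|\cdot\|_{L^p(w^0)}$ is dominated by $\|w^0\|_{L^\infty}^{1/p}\,\|\cdot\|_{L^p}$, and strong $L^p$ convergence of $z\cdot Du^j$ upgrades to convergence of the $p$-th power of the weighted norm in~\eqref{eq:bruco}, by continuity of the map $f\mapsto\|f\|_{L^p(w^0)}^p$ on $L^p(\R^N)$.

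For $p=1$, set $\nu=z\cdot Du$, a finite signed Radon measure on $\R^N$, and $\nu_j=\nu*\eta_j\in L^1(\R^N)$. I would first observe the weak$^*$ convergence $\nu_j\,\mathscr L^N\weakstarto\nu$ in $\mathscr M(\R^N)$, a standard mollifier fact. Next I would verify that the total variations converge, $|\nu_j|(\R^N)\to|\nu|(\R^N)$: the estimate $|\nu_j|(\R^N)\le|\nu|(\R^N)$ follows by convolving the Hahn decomposition of $\nu$, while the matching lower bound is the lower semicontinuity of total variation under weak$^*$ convergence. These two ingredients produce tightness of $(|\nu_j|\,\mathscr L^N)_{j\in\N}$ and allow me to apply Reshetnyak's continuity theorem to the integrand $(x,t)\mapsto w^0(x)\,|t|$, which is continuous on $\R^N\times\R$, bounded on $\R^N\times[-1,1]$, and positively $1$-homogeneous in $t$; the outcome is exactly $\int_{\R^N}w^0\,|\nu_j|\di x\to\int_{\R^N}w^0\di|\nu|$, that is, \eqref{eq:bruco} for $p=1$.

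The main obstacle is the $p=1$ case: weak$^*$ convergence of Radon measures is not by itself sufficient to test against the bounded continuous but in general non-compactly supported function $w^0$, so the convergence of total masses (equivalently, tightness) is essential. Should one prefer to avoid Reshetnyak's theorem, the alternative is a hands-on truncation: approximate $w^0$ by $w^0\chi_R$ for a smooth compact cutoff $\chi_R\in C_c(\R^N)$ with $0\le\chi_R\le1$ and $\chi_R=1$ on $B_R$, apply the weak$^*$ convergence on the compactly supported part, and control the tail uniformly in $j$ by $\|w^0\|_{L^\infty}\,\bigl(|\nu_j|(\R^N)-|\nu_j\chi_R|(\R^N)\bigr)$, which is small for $R$ large thanks again to the convergence of total masses.
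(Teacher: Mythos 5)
Your proposal is correct, and for both cases it differs somewhat from the paper's argument, so a brief comparison is in order. For $p>1$ the paper does not invoke strong $L^p$ convergence of the mollified gradient: it obtains the upper bound $\limsup_j\|D_zu^j\|_{L^p(w^0)}^p\le\|D_zu\|_{L^p(w^0)}^p$ by Jensen's inequality (passing the convolution from $|D_zu|^p$ to $w^0$) plus dominated convergence, and the lower bound by a.e.\ convergence of $\eta_j*D_zu$ and Fatou. Your route, using $Du^j\to Du$ in $L^p$ and continuity of $f\mapsto\int w^0|f|^p\di x$ on $L^p$, is more direct and reaches the same conclusion with less bookkeeping; the paper's two-sided estimate has the minor advantage of not quoting strong $L^p$ convergence of the mollified gradient, but that is a very standard fact. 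For $p=1$ the paper simply asserts that $|\eta_j*D_zu|\,\mathscr L^N\weakstarto|D_zu|$ in $\mathscr M(\R^N)$ together with $w^0\in C_b(\R^N)$ yields the claim; as you correctly point out, weak-$*$ convergence against $C_0$ alone does not allow testing with a merely bounded continuous $w^0$, and the implicit ingredient is precisely the convergence of total masses $\int|\eta_j*D_zu|\di x\to|D_zu|(\R^N)$. Your proof makes this explicit (upper bound by convolving the Jordan decomposition, lower bound by lower semicontinuity of the total variation) and then closes either via Reshetnyak's continuity theorem or, more elementarily, via truncation plus tightness. Both closures are valid; the truncation argument is essentially what the paper's terse assertion hides. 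In short, your proof is sound and, if anything, more explicit about the $p=1$ tightness issue than the paper's.
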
 

\begin{proof}
Let us set $D_z u=z\cdot Du$ for brevity.
For $p=1$, the validity of~\eqref{eq:bruco} follows from the fact that
$|\eta_j*D_z u|\mathscr L^N\weakstarto|D_z u|$ in $\mathscr M(\R^N)$ as
$j\to\infty$ and  $w^0\in C_b(\R^N)$.
For $p>1$, instead, on the one hand, owing to Jensen's inequality and the
Dominated Convergence Theorem, we easily infer that 
\begin{equation*}
\limsup_{j\to\infty}
\|D_z u^j\|_{L^p(w^0)}^p
\le
\lim_{j\to\infty}
\int_{\R^N}(\eta^j*w^0)\,|D_z u|^p\di x
=
\|D_z u\|_{L^p(w^0)}^p.
\end{equation*}
On the other hand, since $(\eta_j*D_z u)(x)\to D_z u(x)$ as $j\to\infty$ for
a.e.\ $x\in\R^N$, owing to Fatou's Lemma, we also get that
\begin{equation*}
\liminf_{j\to\infty}
\int_{\R^N}|\eta_j*D_z u|^p\,w^0\di x
\ge 
\int_{\R^N}|D_z u|^p\,w^0\di x,
\end{equation*}
concluding the proof of ~\eqref{eq:bruco} in the case $p>1$.
\end{proof}

\begin{proof}[Proof of \cref{res:ftc}]
By \cref{res:bruco} and an approximation argument, we can assume $u\in
C^\infty(\R^N)\cap\sob{p}(\R^N)$ without loss of generality.
We prove the two statements separately. 

\vspace{1ex}
\textit{Proof of \ref{item:ftc1}}. 
We can estimate
\begin{equation*}
\begin{split}
\int_{\R^N}|u(x+z)&-u(x)|^p\,w^z(x)\di x
\le 
\int_0^1
\int_{\R^N}|Du(x+tz)\cdot z|^p\,w(x,x+z)\di x\di t
\\
&\le 
\int_0^1\int_{\R^N}|Du(x+tz)\cdot z|^p\,w(x+tz,x+tz)\di x\di t
\\
&\quad
+
\int_0^1\int_{\R^N}|Du(x+tz)\cdot z|^p\,|w(x,x+z)-w(x+tz,x+tz)|\di x\di t
\\
&\le
\|z\cdot Du\|_{L^p(w^0)}^p
+\omega(|z|)\,|z|^p\|Du\|_{L^p}^p
\end{split}
\end{equation*}
by the Fundamental Theorem of Calculus, Jensen's inequality
and~\eqref{eq:omega_w}.

\vspace{1ex}
\textit{Proof of \ref{item:ftc2}}.
Letting $\phi(t)=u(x+tz)$ for every $t\in[0,1]$, we can write 
\begin{equation*}
\phi(1)
=
\phi(0)
+ 
\phi'(0)
+
\int_0^1\phi''(t)\,(1-t)\di t.
\end{equation*}
As a consequence, we infer that
\begin{equation*}
\begin{split}
\big|\|u(\cdot+z)&-u\|_{L^p(w^0)}-\|z\cdot Du\|_{L^p(w^0)}\big|^p
\le
\|u(\cdot+z)-u-z\cdot Du\|_{L^p(w^0)}^p
\\
&=
\frac1{2^p}\int_{\R^N}\bigg|\int_0^1\left(D^2u(x+tz)[z]\cdot
z\right)\,2(1-t)\di t\,\bigg|^p\,w^0(x)\di x
\\
&\le
\frac{|z|^{2p}}{2^{p}}
\int_0^1\left(\int_{\R^N}|D^2u(x+tz)|^p\di x\right)\,2(1-t)\di t
=  
\frac{|z|^{2p}}{2^p}\,\|D^2u\|_{L^p}^p.
\end{split}
\end{equation*}
by Jensen's inequality.
The conclusion hence follows by combining the simple inequality
\begin{equation*}
\big|\|u(\cdot+z)-u\|_{L^p(w^z)}-\|u(\cdot+z)-u\|_{L^p(w^0)}\big|
\le 
\|u(\cdot+z)-u\|_{L^p}\|w^z-w^0\|_{L^\infty}^{1/p}
\end{equation*}
with~\eqref{eq:omega_w} and~\cite{GS25}*{Lem.~2.2(i)} (or, equivalently,
statement~\ref{item:ftc1} for $w=1$).
\end{proof}

We can now prove the pointwise $\limsup$ inequality in
\cref{res:bbm_w}\ref{item:bbm_w_limsup} in the case $w_k=w$ for every $k\in\N$
and $\|w\|_{L^\infty}=1$.

\begin{proof}[Proof of \cref{res:bbm_w}\ref{item:bbm_w_limsup}]
It is enough to show how the proof of~\cite{GS25}*{Th.~3.2(ii)} adapts to the
present setting.
From now on, the notation is the same one used in~\cite{GS25}, with the obvious
modifications.
In particular, we let
\begin{equation}
\label{eq:dado}
\mathscr F_{k,p}^{w_k}(u;A)
=
\int_A\frac{\|u(\cdot+z)-u\|_{L^p(w^z_k)}^p}{|z|^p}\,\rho_k(z)\di z
\end{equation}
for every $k\in\N$ and every measurable set $A\subset\R^N$.
By \cref{res:ftc}\ref{item:ftc1}, we can estimate
\begin{equation*}
\mathscr F_{k,p}^w(u;B_{\delta_l})
\le 
\int_{\mathbb S^{N-1}}
\|\sigma\cdot Du\|_{L^p(w^0)}^p\di\mu_k^l(\sigma)
+
\omega(\delta_l)\,
\|Du\|_{L^p}^p
\end{equation*}
for every $k,l\in\N$ (recall the notation in~\cite{GS25}*{Lem.~2.9}).
Since $\sigma\mapsto\|\sigma\cdot Du\|_{L^p(w^0)}^p\in C(\mathbb S^{N-1})$ and
$\delta_l\to0^+$ as $l\to\infty$, we then get
\begin{equation*}
\lim_{l\to\infty}
\limsup_{k\to\infty}
\mathscr F_{k,p}^w(u;B_{\delta_l})
\le 
\lim_{l\to\infty}
\lim_{k\to\infty}
\int_{\mathbb S^{N-1}}
\|\sigma\cdot Du\|_{L^p(w^0)}^p\di\mu_k^l(\sigma)
=
\mathscr D^\mu_{p,w}(u).
\end{equation*}
Moreover, since 
\begin{equation}
\label{eq:gufetto}
z\mapsto f_{u,w}(z)=\frac{\|u(\,\cdot+z)-u\|_{L^p(w^z)}^p}{|z|^p}\in
C(\R^N\setminus\set{0}),
\end{equation}
we get that (observe that now $\nu=\alpha\delta_0$ for some $\alpha\ge0$
by~\eqref{eq:ddp_alpha}) 
\begin{equation*}
\lim_{l\to\infty}
\limsup_{k\to\infty}
\mathscr F_{k,p}^w(u;A_l)
\le
\lim_{l\to\infty}
\lim_{k\to\infty}
\int_{A_l}
f_{u,w}\di\nu_k
=
\alpha\int_{\R^N\setminus\set*{0}}f_{u,w}\di\delta_0
=0.
\end{equation*}
Finally, we can estimate
\begin{equation*}
\lim_{l\to\infty}
\limsup_{k\to\infty}
\mathscr F_{k,p}^w(u;B_{1/\delta_l}^c)
\le
2^p\,\|u\|_{L^p}^p
\lim_{l\to\infty}
\limsup_{k\to\infty}
\int_{B_{1/\delta_l}^c}
\frac{\rho_{k}(z)}{|z|^p}\di z=0,
\end{equation*}
and the conclusion readily follows.
\end{proof}

We now pass to proof of the $\Gamma$-$\liminf$ inequality in
\cref{res:bbm_w}\ref{item:bbm_w_G-liminf} in the case $w_k=w$ for every
$k\in\N$ and $\|w\|_{L^\infty}=1$.
We let 
\begin{equation*}
w[\eta_j](x,y)=\int_{\R^N}w(x+z,y+z)\,\eta_j(z)\di z
\end{equation*}
for all $j\in\N$ and $x,y\in\R^N$.
It is not difficult to see that, by~\eqref{eq:omega_w}, 
\begin{equation}
\label{eq:w_moll}
\|w[\eta_j]-w\|_{L^\infty}
\le 
|B_1|\,\|\eta\|_{L^\infty}\,\omega\left(\tfrac2j\right)
\quad
\text{for all}\
j\in\N.
\end{equation}

In the proof of \cref{res:bbm_w}\ref{item:bbm_w_G-liminf}, we also need the
following lower semicontinuity result, which immediately follows
from~\eqref{eq:bruco} owing to Fatou's Lemma.

\begin{lemma}
\label{res:lsc_D_w}
If $u\in\sob{p}(\R^N)$ and $u^j=u*\eta_j$ for all $j\in\N$, then
\begin{equation*}
\liminf_{j\to\infty}
\mathscr D^\mu_{p,w}(u^j)
\ge 
\mathscr D^\mu_{p,w}(u).
\end{equation*}
\end{lemma}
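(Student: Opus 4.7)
The plan is to reduce the statement to a direct application of Fatou's lemma on the sphere $\mathbb{S}^{N-1}$, using the pointwise-in-$\sigma$ convergence that is essentially already packaged in \cref{res:bruco}.

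First, I would fix an arbitrary $\sigma\in\mathbb S^{N-1}$ and specialize \cref{res:bruco} with $z=\sigma$. This yields, for every such $\sigma$,
\begin{equation*}
\lim_{j\to\infty}\|\sigma\cdot Du^j\|_{L^p(w^0)}^p
=
\|\sigma\cdot Du\|_{L^p(w^0)}^p.
\end{equation*}
Since the limit exists (not merely a $\liminf$ bound), we have pointwise convergence of the integrands $\sigma\mapsto\|\sigma\cdot Du^j\|_{L^p(w^0)}^p$ to $\sigma\mapsto\|\sigma\cdot Du\|_{L^p(w^0)}^p$ on $\mathbb S^{N-1}$. Note that these integrands are non-negative, which is all one really needs for Fatou.

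Next, I would apply Fatou's lemma with respect to the finite non-negative measure $\mu\in\mathscr M(\mathbb S^{N-1})$: using the definition~\eqref{eq:dir_w} of $\mathscr D^\mu_{p,w}$, we obtain
\begin{equation*}
\liminf_{j\to\infty}\mathscr D^\mu_{p,w}(u^j)
=
\liminf_{j\to\infty}\int_{\mathbb S^{N-1}}\|\sigma\cdot Du^j\|_{L^p(w^0)}^p\di\mu(\sigma)
\ge
\int_{\mathbb S^{N-1}}\liminf_{j\to\infty}\|\sigma\cdot Du^j\|_{L^p(w^0)}^p\di\mu(\sigma),
\end{equation*}
and the right-hand side equals $\mathscr D^\mu_{p,w}(u)$ by the pointwise limit above. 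This gives the claimed inequality.

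There is essentially no obstacle: the only mild point is to make sure \cref{res:bruco} is applicable for a generic direction $\sigma\in\mathbb S^{N-1}$, which is immediate since the lemma is stated for arbitrary $z\in\R^N$, and to record that $u^j\in C^\infty(\R^N)\cap\sob p(\R^N)$ so that $\mathscr D^\mu_{p,w}(u^j)$ is well defined (with $Du^j$ an $L^p$ function also in the BV case $p=1$).
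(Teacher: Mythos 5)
Your proof is correct and is exactly the argument the paper has in mind: the paper states that \cref{res:lsc_D_w} ``immediately follows from~\eqref{eq:bruco} owing to Fatou's Lemma,'' and you spell out precisely that step, taking $z=\sigma\in\mathbb S^{N-1}$ in \cref{res:bruco} and integrating over $\mathbb S^{N-1}$ against the non-negative finite measure $\mu$.
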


\begin{proof}[Proof of \cref{res:bbm_w}\ref{item:bbm_w_G-liminf}]
It is enough to show how the proof of~\cite{GS25}*{Th.~3.2(iii)} adapts to the
present setting.
From now on, the notation is the same one used in~\cite{GS25}, with the obvious
modifications (also recall the notation~\eqref{eq:dado}).
We set $u_k^j=u_k*\eta_j$ and $u^j=u*\eta_j$ for every $k,j\in\N$.
We replace the use of Minkowski’s inequality at the beginning of the proof
of~\cite{GS25}*{Th.~3.2(iii)} with 
\begin{equation*}
\mathscr F_{k,p}^w(u_k)
\ge 
\mathscr F_{k,p}^w(u_k^j)
-
|B_1|\,\|\eta\|_{L^\infty}\,\omega\left(\tfrac2j\right)\mathscr F_{k,p}^1(u_k)
\end{equation*}
for all $k,j\in\N$, which readily follows by combining~\eqref{eq:w_diff_F}
and~\eqref{eq:w_moll}.  
Hence 
\begin{equation}
\label{eq:gintonic}
\liminf_{k\to\infty}
\mathscr F_{k,p}^w(u_k)
\ge
\liminf_{k\to\infty} 
\mathscr F_{k,p}^w(u_k^j)
-
C_{\eta}\,\omega\left(\tfrac2j\right)
\limsup_{k\to\infty}
\mathscr F_{k,p}^1(u_k)
\end{equation}
for all $j\in\N$. 
As in~\cite{GS25}, we now just have to show that 
\begin{equation}
\label{eq:poncio}
\lim_{k\to\infty}
\mathscr F_{k,p}^w(u_k^j)
=
\mathscr D^{\mu}_{p,w}(u^j)
\end{equation}
for every $j\in\N$, from which the conclusion follows by letting $j\to\infty$
in~\eqref{eq:gintonic}, thanks to \cref{res:lsc_D_w}.
The proof of~\eqref{eq:poncio} goes as in~\cite{GS25} so we only sketch it.
We first notice that 
\begin{equation*}
\lim_{l\to\infty}
\lim_{k\to\infty}
\mathscr F_{k,p}^w(u_k^j;B_{1/\delta_l}^c)
\le
2^p\,M\,\|u^j\|_{L^p}^p
\lim_{l\to\infty}
\delta^p_l=0
\end{equation*}
for every $j\in\N$, where $M>0$ depends on~\eqref{eq:ddp_p} only.
Moreover, with the same notation in~\eqref{eq:gufetto}, since $f_{u_k^j,w}\to
f_{u^j,w}$ locally uniformly on $\R^N\setminus\set*{0}$ as $k\to\infty$, we
also have that (recall that now $\nu=\alpha\delta_0$ for some $\alpha\ge0$
by~\eqref{eq:ddp_alpha})
\begin{equation*}
\begin{split}
\lim_{l\to\infty}
\lim_{k\to\infty}
\mathscr F_{k,p}^w(u_k^j;A_l)
&=
\lim_{l\to\infty}
\lim_{k\to\infty}
\int_{A_l}f_{u_k^j}\di\nu_k
=
\alpha\int_{\R^N\setminus\set*{0}}f_{u^j}\di\delta_0
=0
\end{split}
\end{equation*}
for every $j\in\N$.
Besides, since $\sigma\mapsto\|\sigma\cdot Du_k^j\|_{L^p(w^0)}^p$ converges
uniformly on $\mathbb S^{N-1}$ to $\sigma\mapsto\|\sigma\cdot
Du^j\|_{L^p(w^0)}^p$ as $k\to\infty$ for every $j\in\N$, we can infer that 
\begin{equation*}
\begin{split}
\lim_{l\to\infty}
\lim_{k\to\infty}
\int_{B_{\delta_l}}
&\left\|\tfrac{z}{|z|}\cdot Du_k^j\right\|_{L^p(w^0)}^p\di\nu_k(z)
=
\lim_{l\to\infty}
\lim_{k\to\infty}
\int_{\mathbb S^{N-1}}
\|\sigma\cdot Du_k^j\|_{L^p(w^0)}^p\di\mu_k^l(\sigma)
\\
&=
\lim_{l\to\infty}
\int_{\mathbb S^{N-1}}
\|\sigma\cdot Du^j\|_{L^p(w^0)}^p\di\mu^l(\sigma)
=
\int_{\mathbb S^{N-1}}
\|\sigma\cdot Du^j\|_{L^p(w^0)}^p\di\mu(\sigma)
\end{split}
\end{equation*}
for every $j\in\N$ (recall the notation in~\cite{GS25}*{Lem.~2.9}).
Hence the claim in~\eqref{eq:poncio} reduces to
\begin{equation*}
\lim_{l\to\infty}
\lim_{k\to\infty}
\mathscr F_{k,p}^w(u_k^j;B_{\delta_l})
=
\int_{\mathbb S^{N-1}}
\|\sigma\cdot Du^j\|_{L^p(w^0)}^p\di\mu(\sigma)
\end{equation*} 
or, equivalently,
\begin{equation}
\label{eq:maciste}
\lim_{l\to\infty}
\lim_{k\to\infty}
\left|
\mathscr F_{k,p}^w(u_k^j;B_{\delta_l})
-
\int_{B_{\delta_l}}\left\|\tfrac{z}{|z|}\cdot
Du_k^j\right\|_{L^p(w^0)}^p\di\nu_k(z)
\right|
=
0
\end{equation}
for every $j\in\N$.
To show~\eqref{eq:maciste}, we observe that, by \cref{res:ftc}\ref{item:ftc2},
we can estimate  
\begin{equation*}
\left|
\frac{\|u_k^j(\,\cdot+z)-u_k^j\|_{L^p(w^z)}}{|z|}
-
\left\|\tfrac{z}{|z|}\cdot Du_k^j\right\|_{L^p(w^0)}
\right|
\le 
\frac{|z|}{2}\,\|D^2u_k^j\|_{L^p}
+
\omega(|z|)^{1/p}\,\|Du_k^j\|_{L^p}\end{equation*}
for every $z\in\R^N$.
Hence, since $|a^p-b^p|\le p\max\set*{a,b}^{p-1}|a-b|$, for all $a,b\ge0$, and 
\begin{equation*}
\max\set*{
\frac{\|u_k^j(\,\cdot+z)-u_k^j\|_{L^p(w^z)}}{|z|},
\left\|\tfrac{z}{|z|}\cdot Du_k^j\right\|_{L^p(w^0)}
}
\le
\|Du_k^j\|_{L^p}
\end{equation*}
for every $k,j\in\N$ and $z\in\R^N$ by \cref{res:ftc}\ref{item:ftc1}, we can
estimate
\begin{equation*}
\begin{split}
\bigg|
&\frac{\|u_k^j(\,\cdot+z)-u_k^j\|_{L^p(w^z)}^p}{|z|^p}
-
\left\|\tfrac{z}{|z|}\cdot Du_k^j\right\|_{L^p(w^0)}^p
\bigg|
\\
&\qquad\le
p\,
\|Du_k^j\|_{L^p}^{p-1}
\,
\left|
\frac{\|u_k^j(\,\cdot+z)-u_k^j\|_{L^p(w^z)}}{|z|}
-
\left\|\tfrac{z}{|z|}\cdot Du_k^j\right\|_{L^p(w^0)}
\right|
\\
&\qquad\le 
\frac{p|z|}{2}
\,
\|Du_k^j\|_{L^p}^{p-1}
\,
\|D^2u_k^j\|_{L^p}
+
p\,
\omega(|z|)^{1/p}\,\|Du_k^j\|_{L^p}^p,
\end{split}
\end{equation*} 
for every $i,j\in\N$ and $z\in\R^N$.
We thus obtain that
\begin{equation*}
\begin{split}
\bigg|
\mathscr F_{t_k,p}(u_k^j;B_{\delta_l})
-
\int_{B_{\delta_l}}\left\|\tfrac{z}{|z|}\cdot Du_k^j\right\|_{L^p}^p\di\nu_k(z)
\bigg|
&\le 
\frac{p}{2}
\,
\|Du_k^j\|_{L^p}^{p-1}
\,
\|D^2u_k^j\|_{L^p}
\,
\delta_l\,\nu_k(B_{\delta_l})
\\
&\quad+
p\,\omega(\delta_l)^{1/p}\|Du_k^j\|_{L^p}^p\,\nu_k(B_{\delta_l})
\end{split}
\end{equation*}
for every $k,j,l\in\N$, from which the claimed~\eqref{eq:maciste} readily
follows, concluding the proof.
\end{proof}

We finally prove  \cref{res:bbm_w}\ref{item:bbm_w_G-liminf_R} by reducing to
\cref{res:bbm_w}\ref{item:bbm_w_G-liminf} by means of the following simple
estimate.

\begin{lemma}
\label{res:ell}
Assume that $w>0$ on $\R^{2n}$, let $R>0$ and set 
\begin{equation}
\label{eq:ell_R}
\ell_R
=
\inf\set*{w(x,x+z):x\in B_R,\ z\in B_{2R}}>0.
\end{equation}
If $u\in L^p_R(\R^N)$ and $k\in\N$ is such that
$\|w_k-w\|_{L^\infty}\le\frac{\ell_R}2$, then 
\begin{equation*}
\mathscr F^1_{k,p}(u)
\le
\frac2{\ell_R}
\,
\mathscr F^{w_k}_{k,p}(u)
+
2^p\|u\|_{L^p}^p
\int_{B_{2R}^c}\frac{\rho_k(z)}{|z|^p}\di z.
\end{equation*}
\end{lemma}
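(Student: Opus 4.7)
The plan is to split $\mathscr F^1_{k,p}(u)$ at the threshold $|z|=2R$: for the far part $|z|\ge 2R$, the triangle inequality $\|u(\cdot+z)-u\|_{L^p}\le 2\|u\|_{L^p}$ produces the tail term appearing on the right-hand side, while for the near part $|z|<2R$, I would bound the unweighted increment pointwise in $z$ by its weighted counterpart divided by $\ell_R/2$ and then integrate against $\rho_k(z)/|z|^p$.

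The positivity $\ell_R>0$ is immediate, since $(x,z)\mapsto w(x,x+z)$ is continuous and strictly positive on the compact set $\closure{B}_R\times\closure{B}_{2R}$ and hence attains a positive minimum. Combined with the hypothesis $\|w_k-w\|_{L^\infty}\le\ell_R/2$, this yields the pointwise lower bound $w_k(x,x+z)\ge\ell_R/2$ for every $x\in\closure{B}_R$ and $z\in\closure{B}_{2R}$.

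The heart of the argument is the estimate, valid for each $z\in B_{2R}$,
\begin{equation*}
\|u(\cdot+z)-u\|_{L^p}^p
\le
\frac{2}{\ell_R}\,\|u(\cdot+z)-u\|_{L^p(w_k^z)}^p.
\end{equation*}
Since $u\in L^p_R(\R^N)$, the integrand $|u(x+z)-u(x)|^p$ vanishes outside $\closure{B}_R\cup(\closure{B}_R-z)\subset\closure{B}_{3R}$. On the piece $\{x\in\closure{B}_R\}$ the lower bound on $w_k$ established above applies directly; on the remaining piece $\{x+z\in\closure{B}_R\}\setminus\closure{B}_R$, the change of variables $y=x+z$ reduces the task to an analogous lower bound on $w_k(y-z,y)$, which again follows from continuity and positivity of $w$ on the compact region $\closure{B}_{3R}\times\closure{B}_R$.

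Finally, splitting the $z$-integral at $|z|=2R$ and applying the two estimates yields
\begin{equation*}
\mathscr F^1_{k,p}(u)
\le
\frac{2}{\ell_R}\int_{B_{2R}}\frac{\|u(\cdot+z)-u\|_{L^p(w_k^z)}^p}{|z|^p}\,\rho_k(z)\di z
+
2^p\,\|u\|_{L^p}^p
\int_{B_{2R}^c}\frac{\rho_k(z)}{|z|^p}\di z,
\end{equation*}
and extending the first integral from $B_{2R}$ to all of $\R^N$ (which only increases it by non-negativity of the integrand) completes the proof. The main obstacle is the two-sided character of the support of $|u(x+z)-u(x)|^p$ in the near regime, which forces one to control $w_k$ on a slightly larger region than the literal domain over which $\ell_R$ is defined; this is handled by the change-of-variable trick together with the positivity of $w$ on the enlarged compact set.
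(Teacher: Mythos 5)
You have put your finger on a genuine subtlety that the paper's own proof glosses over. For $z\in B_{2R}$, the difference $u(\cdot+z)-u$ is supported (up to a null set) in $B_R\cup(B_R-z)\subset B_{3R}$, not in $B_R$ alone; the paper asserts that $z\in B_{2R}$ and $x\in B_R^c$ imply $x+z\in B_R^c$ --- which is false (take any $x$ with $|x|=\tfrac32 R$ and $z=-x$) --- and it is precisely this implication that would make the integrand vanish for $x\notin B_R$. So you were right to handle the piece $\{x\notin\closure{B}_R,\ x+z\in\closure{B}_R\}$ separately rather than silently ignoring it.

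However, your treatment of that piece does not recover the constant $\ell_R$ stated in the lemma. After the change of variables $y=x+z$, the weight in question is $w_k(y-z,y)$ with $y\in\closure{B}_R$ and $y-z\in\closure{B}_{3R}\setminus B_R$; this pair is \emph{not} of the form $(a,a+\zeta)$ with $a\in B_R$, $\zeta\in B_{2R}$, so the infimum $\ell_R$ gives no control over it. Since $w$ is not assumed symmetric, positivity of $w$ on $\closure{B}_{3R}\times\closure{B}_R$ only yields some lower bound $m$ which may well be smaller than $\ell_R$, and then the hypothesis $\|w_k-w\|_{L^\infty}\le\ell_R/2$ no longer even forces $w_k>0$ on that region. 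In effect your argument proves the inequality only with $\ell_R$ replaced by a possibly smaller constant, under a correspondingly stronger smallness assumption on $\|w_k-w\|_{L^\infty}$. The natural repair is to take $\ell_R=\inf\set*{w(x,x+z):x\in B_{3R},\ z\in B_{2R}}$ both in the definition and in the smallness hypothesis; the subsequent applications of the lemma, which only require \emph{some} positive constant, are unaffected.
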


\begin{proof}
Note that, if $z\in B_{2R}$ and $x\in B_R^c$, then also $x+z\in B_R^c$. 
Therefore, we have
\begin{equation*}
\begin{split}
\mathscr F^1_{k,p}(u)
&\le
\int_{B_{2R}}\frac{\|u(\cdot+z)-u\|_{L^p}^p}{|z|^p}\,\rho_k(z)\di z
+
2^p\|u\|_{L^p}^p
\int_{B_{2R}^c}\frac{\rho_k(z)}{|z|^p}\di z
\\
&= 
\int_{B_{2R}}\frac{\|u(\cdot+z)-u\|_{L^p(B_R)}^p}{|z|^p}\,\rho_k(z)\di z
+
2^p\|u\|_{L^p}^p
\int_{B_{2R}^c}\frac{\rho_k(z)}{|z|^p}\di z.
\end{split}
\end{equation*}
The conclusion hence follows in virtue of the definition in~\eqref{eq:ell_R},
since the assumption that $\|w_k-w\|_{L^\infty}\le\frac{\ell_R}2$ implies that

\begin{equation*}
w_k(x,x+z)
\ge 
w(x,x+z)-|w_k(x,x+z)-w(x,x+z)|
\ge 
\ell_R
-
\frac{\ell_R}2
=
\frac{\ell_R}2
\end{equation*}
for every $x\in B_R$ and $z\in B_{2R}$.
\end{proof}

\begin{proof}[Proof of \cref{res:bbm_w}\ref{item:bbm_w_G-liminf_R}]
Without loss of generality and possibly passing to a non-re\-la\-bel\-led
subsequence, we can assume that 
\begin{equation*}
\liminf_{k\to\infty}\mathscr F^{w_k}_{k,p}(u_k)
=
\lim_{k\to\infty}\mathscr F^{w_k}_{k,p}(u_k)
<\infty.
\end{equation*}
By applying \cref{res:ell} to each $u_k\in L^p_R(\R^N)$ for every $k\in\N$
sufficiently large depending on $R>0$, we get that (where $\ell_R>0$ is defined as in~\eqref{eq:ell_R} in \cref{res:ell})
\begin{equation*}
\limsup_{k\to\infty}
\mathscr F^1_{k,p}(u_k)
\le
\frac2{\ell_R}
\,
\lim_{k\to\infty}\mathscr F^{w_k}_{k,p}(u_k)
+
\frac{M}{R^p}\,\|u\|_{L^p}^p<\infty,
\end{equation*} 
where $M>0$ depends on~\eqref{eq:ddp_p} only.
The conclusion hence follows by \cref{res:bbm_w}\ref{item:bbm_w_G-liminf}.
\end{proof}

\subsection{Proof of \texorpdfstring{\cref{res:comp_w}}{Theorem 1.5}}

The proof of \cref{res:comp_w} is a simple combination of \cref{def:p-compact}
and \cref{res:ell}.

\begin{proof}[Proof of \cref{res:comp_w}]
Let $R>0$ be fixed and let $\ell_R>0$ be given by~\eqref{eq:ell_R}.
Let $k_R\in\N$ be such that $\|w_k-w\|_{L^\infty}\le\frac{\ell_R}2$ for every
$k\ge k_R$. 
By \cref{res:ell}, we can thus estimate
\begin{equation*}
\sup_{k\ge k_R}
\mathscr F^1_{k,p}(u_k)
\le 
\frac{2}{\ell_R}
\sup_{k\ge k_R}
\mathscr F^{w_k}_{k,p}(u_k)
+
M
\,\sup_{k\in\N}\|u_k\|_{L^p}<\infty,
\end{equation*}
where $M>0$ depends on~\eqref{eq:ddp_p} only.
Therefore, we get that 
\begin{equation*}
\sup_{k\in\N}
\left(
\|u_k\|_{L^p}
+
\mathscr F^1_{k,p}(u_k)
\right)
<\infty
\end{equation*}
and thus, by \cref{def:p-compact}, we infer that $(u_k)_{k\in\N}$ is compact in $L^p(E)$ for every compact set
$E\subset\R^N$ and any of its $L^p_{\rm loc}(\R^N)$ limits is in
$\sob{p}(\R^N)$.
The conclusion hence immediately follows by recalling that
$(u_k)_{k\in\N}\subset \sob{p}_R(\R^N)$.
\end{proof}

\subsection{Proof of \texorpdfstring{\cref{res:poincare_w}}{Theorem 1.9}}

We establish \cref{res:poincare_w} by exploiting \cref{res:strong_bbm_w}.

\begin{proof}[Proof of \cref{res:poincare_w}]
By contradiction,  we can assume that there exists $\e>0$ and
$(u_k)_{k\in\N}\subset L^p_0(\Omega)$ such that $\|u_k\|_{L^p}^p=1$ for all
$k\in\N$ and  
\begin{equation*}
\mathscr F_{k,p}^{w_k}(u_k)
<
\frac{1}{A_{p,\mu,\Omega,w}+\e}
\end{equation*} 
for all $k\in\N$.
By~\ref{item:strong_bbm_comp} and~\ref{item:strong_bbm_liminf} in 
\cref{res:strong_bbm_w}, up to passing to a subsequence, there exists $u\in\sob
p_0(\Omega)$ such that $u_k\to u$ in $L^p(\R^N)$ as $k\to\infty$ and 
\begin{equation*}
\mathscr D^\mu_{p,w}(u)
\le 
\liminf_{k\to\infty}\mathscr F^{w_k}_{k,p}(u_k)
\le 
\frac{1}{A_{p,\mu,\Omega,w}+\e}.
\end{equation*}
This, in turn, in combination with~\eqref{eq:poincare_mu_w} with the optimal
constant $C=A_{p,\mu,\Omega,w}$, implies that
$A_{p,\mu,\Omega,w}+\e<A_{p,\mu,\Omega,w}$, which is a contradiction, since
$\e>0$ by assumption.
\end{proof}

\subsection{Proof of \texorpdfstring{\cref{res:eigen_conv}}{Theorem 1.10}}

The proof of \cref{res:eigen_conv} is a plain application of \cref{res:strong_bbm_w} and exploits a simple argument which is at heart of many results of~\cites{SS24,FPSS24}.

\begin{proof}[Proof of \cref{res:eigen_conv}]
We split the proof into two parts, proving the convergence of the eigenvalues in~\eqref{eq:eigenv_conv} and of the eigenfunctions in~\eqref{eq:eigenf_conv} separately.

\vspace{1ex}

\textit{Part 1: proof of~\eqref{eq:eigenv_conv}}.
On the one hand, given $\e>0$, we can find $u\in\sob p_0(\Omega)$ such that
$\|u\|_{L^p}=1$ and 
\begin{equation*}
\mathscr D_{p,w}^\mu(u) 
\le 
\lambda_{p,w}^\mu(\Omega)
+
\e.
\end{equation*}
Since $\lambda_{k,p}^{w_k}(\Omega)\le\mathscr F^{w_k}_{k,p}(u)$ for all
$k\in\N$, by \cref{res:strong_bbm_w}\ref{item:strong_bbm_limsup} we thus infer
that
\begin{equation*}
\limsup_{k\to\infty}
\lambda_{k,p}^{w_k}(\Omega)
\le 
\limsup_{k\to\infty}
\mathscr F^{w_k}_{k,p}(u)
\le 
\mathscr D_{p,w}^\mu(u) 
\le 
\lambda_{p,w}^\mu(\Omega)
+
\e
\end{equation*}
from which we immediately get that 
\begin{equation}
\label{eq:sole}
\limsup_{k\to\infty}
\lambda_{k,p}^{w_k}(\Omega)
\le 
\lambda_{p,w}^\mu(\Omega).
\end{equation}
In particular, we deduce that 
\begin{equation}
\label{eq:bradipo}
\sup_{k\in\N}
\lambda_{k,p}^{w_k}(\Omega)
<\infty.
\end{equation}
On the other hand, for each $k\in\N$, we can find $u_k\in L^p_0(\Omega)$ with
$\|u_k\|_{L^p}=1$ such that 
\begin{equation}
\label{eq:tenda}
\mathscr F^{w_k}_{k,p}(u_k)
\le 
\lambda_{k,p}^{w_k}(\Omega)
+
\frac1k.
\end{equation}
Owing to~\eqref{eq:bradipo}, we thus get that $(u_k)_{k\in\N}\subset
L^p_0(\Omega)$ satisfies~\eqref{eq:strong_bbm_comp}.
Therefore,
by~\ref{item:strong_bbm_comp} and~\ref{item:strong_bbm_liminf} in
\cref{res:strong_bbm_w}, there exist $(u_{k_j})_{j\in\N}$ and  $u\in\sob p(\Omega)$ such that $u_{k_j}\to u$ in $L^p(\R^N)$ as
$k\to\infty$, and thus $u\in\sob p_0(\Omega)$ with $\|u\|_{L^p}=1$, and, also owing to~\eqref{eq:tenda} and~\eqref{eq:sole},  
\begin{equation}
\label{eq:luna}
\mathscr D^\mu_{p,w}(u)
\le 
\liminf_{j\to\infty}
\mathscr F^{w_{k_j}}_{k_j,p}(u_{k_j})
\le 
\liminf_{k\to\infty}
\mathscr F^{w_k}_{k,p}(u_k)
\le
\liminf_{k\to\infty}
\lambda_{k,p}^{w_k}(\Omega)
\le 
\lambda^\mu_{p,w}(\Omega).
\end{equation}
Since $u\in\sob p_0(\Omega)$ is such that $\|u\|_{L^p}=1$, we must have that $\lambda^\mu_{p,w}(\Omega)\le\mathscr D^\mu_{p,w}(u)$ and thus all inequalities in~\eqref{eq:luna} are equalities, yielding 
\begin{equation}
\lambda^\mu_{p,w}(\Omega)
=
\liminf_{k\to\infty}
\lambda_{k,p}^{w_k}(\Omega)
\end{equation} 
which, combined with~\eqref{eq:sole}, yields~\eqref{eq:eigenv_conv}.

\vspace{1ex}

\textit{Part 2: proof of~\eqref{eq:eigenf_conv}}.
By assumption, we have $\|u_k\|_{L^p}=1$ and $\mathscr F^{w_k}_{k,p}(u_k)=\lambda^{w_k}_{k,p}(\Omega)$ for all $k\in\N$. 
Owing to~\eqref{eq:eigenv_conv}, we thus infer the validity of~\eqref{eq:strong_bbm_comp}, and so, by \cref{res:strong_bbm_w}\ref{item:strong_bbm_comp}, there exist $(u_{k_j})_{j\in\N}$ and $u\in\sob p_0(\Omega)$ satisfying~\eqref{eq:eigenf_conv}.
We are thus left to prove that $u\in\sob p_0(\Omega)$ is an eigenfunction relative to $\lambda^\mu_{p,w}(\Omega)$.
Indeed, by~\eqref{eq:eigenf_conv}, we know that $\|u\|_{L^p}=1$. 
Moreover, by \cref{res:strong_bbm_w}\ref{item:strong_bbm_liminf}, we also infer that  
\begin{equation}
\mathscr D^\mu_{p,w}(u)
\le 
\liminf_{j\to\infty}
\mathscr F^{w_{k_j}}_{k_j,p}(u_{k_j})
\le 
\liminf_{k\to\infty}
\mathscr F^{w_k}_{k,p}(u_k)
=
\liminf_{k\to\infty}
\lambda^{w_k}_{k,p}(\Omega)
=
\lambda^\mu_{p,w}(\Omega),
\end{equation}
from which we obtain $\mathscr D^\mu_{p,w}(u)=\lambda^\mu_{p,w}(\Omega)$, concluding the proof.
\end{proof}

\subsection{Proof of \texorpdfstring{\cref{res:J}}{Theorem 1.2}}

We now pass to the proof of \cref{res:J}.
Since the argument follows the one of the proof of~\cite{GS25}*{Th.~5.1} with minor
changes, we only sketch it for the ease of the reader.

\begin{proof}[Proof of \cref{res:J}]
We prove the three statements separately.

\vspace{1ex}

\textit{Proof of~\ref{item:J_comp}}.
Let $E\subset A$ be a compact set and let $\e>0$.
Let $\delta>0$ be given by~\eqref{eq:J_comp} and also such that $E_\delta\subset A$, where 
\begin{equation*}
E_\delta=\bigcup_{x\in E}B_\delta(x).
\end{equation*} 
Letting $\eta_\delta=\chi_{B_\delta}/|B_\delta|$ and arguing as in the proof of~\cite{GS25}*{Lem.~5.3}, we can estimate
\begin{equation}
\label{eq:brez}
\begin{split}
\|\eta_\delta*v-v\|_{L^p(E)}^p
&\le 
\frac{1}{|B_\delta|}
\int_E\int_{B_\delta}
|v(x+z)-v(x)|^p
\di z\di x
\\
&= 
\frac{1}{|B_1|}
\int_E\int_{B_\delta(x)}
\frac{|v(y)-v(x)|^p}{\delta^N}
\di y\di x
\\
&\le 
\frac{\e}{|B_1|}
\int_E\int_{B_\delta(x)}
|v(y)-v(x)|^p\,J_k(x,y)
\di y\di x
\le
\e
\mathscr J_{k,p}(v) 
\end{split}
\end{equation}
for every $v\in L^p(\R^N)$ and $k\in\N$, owing to~\eqref{eq:J_comp}.
Now, if $(u_k)_{k\in\N}\subset L^p(A)$ satisfies~\eqref{eq:J_sup_comp}, then the sequence $(v_k^E)_{k\in\N}\subset L^p(\R^N)$, where, for all $k\in\N$, \begin{equation*}
v_k^E
=
\begin{cases}
u_k & \text{on}\ E,
\\[1ex]
0 & \text{otherwise},
\end{cases}
\end{equation*} 
is bounded in $L^p(\R^N)$. 
Therefore, by~\cite{Brezis11}*{Cor.~4.28}, for every $\delta>0$ sufficiently small, $(\eta_\delta*v_k^E)_{k\in\N}$ is compact in $L^p(E)$, and thus totally bounded in $L^p(E)$.
By~\eqref{eq:brez}, also   $(u_k)_{k\in\N}\subset L^p(E)$ is thus totally bounded  in $L^p(E)$, and therefore compact in $L^p(E)$.

\vspace{1ex}

\textit{Proof of \ref{item:J_G-liminf}}.
If $(u_k)_{k\in\N}\subset L^p(A)$ is such that $u_k\to
u$ in $L^p(A)$ as $k\to\infty$, then, up to a subsequence, by~\eqref{eq:J_liminf} we get that 
\begin{equation*}
\liminf_{k\to\infty}
|u_k(x)-u_k(y)|^p\,J_k(x,y)
\ge
|u(x)-u(y)|^p\,J(x,y)
\end{equation*}
for $\mathscr L^{2n}$-a.e.\ $(x,y)\in A^2$, so that the conclusion follows
by Fatou's Lemma.

\vspace{1ex}

\textit{Proof of \ref{item:J_lim}}.
If $u\in L^p(A)$ is such that $\mathscr J_p(u)<\infty$, then 
\begin{equation*}
(x,y)\mapsto|u(x)-u(y)|^pJ_p(x,y)\in L^1(A^2).
\end{equation*}
Therefore, by~\eqref{eq:J_lim}, the conclusion follows either by the Dominated
Convergence Theorem under~\eqref{eq:J_dom}, or by the Monotone Convergence
Theorem under~\eqref{eq:J_mon}.
\end{proof}

\subsection{Proof of \texorpdfstring{\cref{res:nonloc_bbm_w}}{Corollary 1.10}}

We conclude with a brief sketch the proof of \cref{res:nonloc_bbm_w} for the ease of the reader.  

\begin{proof}[Proof of \cref{res:nonloc_bbm_w}]
To prove statements~\ref{item:nonloc_G-liminf} and~\ref{item:nonloc_lim} of \cref{res:nonloc_bbm_w}, we can directly apply the corresponding statements~\ref{item:J_G-liminf} and~\ref{item:J_lim} of~\cref{res:J} with the choices
\begin{equation*}
A=\R^N,
\quad
J_k(x,y)=\frac{\rho_k(x-y)}{|x-y|^p}\,w_k(x,y)
\quad
\text{and}
\quad
J(x,y)=\kappa(x-y)\,w(x,y)
\end{equation*}
for all $x,y\in\R^N$ and $k\in\N$, since clearly~\eqref{eq:nonloc_bbm_w_ass} implies the validity of~\eqref{eq:J_lim} and~\eqref{eq:J_dom}.
To prove  statement~\ref{item:J_comp} of \cref{res:nonloc_bbm_w}, instead, it is enough to observe that
\begin{equation*}
\inf\set*{w_k(x,y) : x\in B_R,\ y\in B_1(x)}
\ge 
\frac12\inf\set*{w(x,y) : x\in B_R,\ y\in B_1(x)}>0
\end{equation*} 
for all $k\in\N$ sufficiently large
whenever $R>0$ is fixed, and then argue as in the proof of statement~\ref{item:J_comp} of \cref{res:J}.
We omit the analogous computations.
\end{proof}


\begin{bibdiv}
\begin{biblist}

\bib{AV16}{article}{
   author={Acu\~na Valverde, Luis},
   title={Heat content estimates over sets of finite perimeter},
   journal={J. Math. Anal. Appl.},
   volume={441},
   date={2016},
   number={1},
   pages={104--120},
   issn={0022-247X},
   review={\MR{3488049}},
   doi={10.1016/j.jmaa.2016.03.087},
}

\bib{AABPT23}{book}{
   author={Alicandro, Roberto},
   author={Ansini, Nadia},
   author={Braides, Andrea},
   author={Piatnitski, Andrey},
   author={Tribuzio, Antonio},
   title={A variational theory of convolution-type functionals},
   series={SpringerBriefs on PDEs and Data Science},
   publisher={Springer, Singapore},
   date={[2023] \copyright 2023},
   pages={viii+116},
   isbn={978-981-99-0684-0},
   isbn={978-981-99-0685-7},
   review={\MR{4633516}},
   doi={10.1007/978-981-99-0685-7},
}

\bib{BS24}{article}{
   author={Bessas, K.},
   author={Stefani, Giorgio},
   title={Non-local $BV$ functions and a denoising model with $L^1$ fidelity},
   journal={Adv. Calc. Var.},
   date={2025},
   volume={18},
   number={1},
   pages={189--217},
   doi={10.1515/acv-2023-0082},
}

\bib{BBM01}{article}{
   author={Bourgain, Jean},
   author={Brezis, Haim},
   author={Mironescu, Petru},
   title={Another look at Sobolev spaces},
   conference={
      title={Optimal control and partial differential equations},
   },
   book={
      publisher={IOS, Amsterdam},
   },
   isbn={1-58603-096-5},
   date={2001},
   pages={439--455},
   review={\MR{3586796}},
}

\bib{Braides02}{book}{
   author={Braides, Andrea},
   title={$\Gamma$-convergence for beginners},
   series={Oxford Lecture Series in Mathematics and its Applications},
   volume={22},
   publisher={Oxford University Press, Oxford},
   date={2002},
   pages={xii+218},
   isbn={0-19-850784-4},
   review={\MR{1968440}},
   doi={10.1093/acprof:oso/9780198507840.001.0001},
}

\bib{BPS16}{article}{
   author={Brasco, Lorenzo},
   author={Parini, Enea},
   author={Squassina, Marco},
   title={Stability of variational eigenvalues for the fractional
   $p$-Laplacian},
   journal={Discrete Contin. Dyn. Syst.},
   volume={36},
   date={2016},
   number={4},
   pages={1813--1845},
   issn={1078-0947},
   review={\MR{3411543}},
   doi={10.3934/dcds.2016.36.1813},
}

\bib{Brezis11}{book}{
   author={Brezis, Haim},
   title={Functional analysis, Sobolev spaces and partial differential
   equations},
   series={Universitext},
   publisher={Springer, New York},
   date={2011},
   pages={xiv+599},
   isbn={978-0-387-70913-0},
   review={\MR{2759829}},
}

\bib{CCLP23}{article}{
   author={Carbotti, Alessandro},
   author={Cito, Simone},
   author={La Manna, Domenico Angelo},
   author={Pallara, Diego},
   title={Gamma-convergence of Gaussian fractional perimeter},
   journal={Adv. Calc. Var.},
   volume={16},
   date={2023},
   number={3},
   pages={571--595},
   issn={1864-8258},
   review={\MR{4609800}},
   doi={10.1515/acv-2021-0032},
}

\bib{CS23}{article}{
   author={Comi, Giovanni E.},
   author={Stefani, Giorgio},
   title={A distributional approach to fractional Sobolev spaces and
   fractional variation: asymptotics I},
   journal={Rev. Mat. Complut.},
   volume={36},
   date={2023},
   number={2},
   pages={491--569},
   issn={1139-1138},
   review={\MR{4581759}},
   doi={10.1007/s13163-022-00429-y},
}

\bib{CDKNP23}{article}{
   author={Crismale, V.},
   author={De Luca, L.},
   author={Kubin, A.},
   author={Ninno, A.},
   author={Ponsiglione, M.},
   title={The variational approach to $s$-fractional heat flows and the
   limit cases $s\to0^+$ and $s\to1^-$},
   journal={J. Funct. Anal.},
   volume={284},
   date={2023},
   number={8},
   pages={Paper No. 109851, 38},
   issn={0022-1236},
   review={\MR{4544090}},
   doi={10.1016/j.jfa.2023.109851},
}

\bib{DalMaso93}{book}{
   author={Dal Maso, Gianni},
   title={An introduction to $\Gamma$-convergence},
   series={Progress in Nonlinear Differential Equations and their
   Applications},
   volume={8},
   publisher={Birkh\"auser Boston, Inc., Boston, MA},
   date={1993},
   pages={xiv+340},
   isbn={0-8176-3679-X},
   review={\MR{1201152}},
   doi={10.1007/978-1-4612-0327-8},
}

\bib{D02}{article}{
   author={D\'avila, J.},
   title={On an open question about functions of bounded variation},
   journal={Calc. Var. Partial Differential Equations},
   volume={15},
   date={2002},
   number={4},
   pages={519--527},
   issn={0944-2669},
   review={\MR{1942130}},
   doi={10.1007/s005260100135},
}

\bib{DDG24}{article}{
   author={Davoli, Elisa},
   author={Di Fratta, Giovanni},
   author={Giorgio, Rossella},
   title={A Bourgain-Brezis-Mironescu formula accounting for nonlocal
   antisymmetric exchange interactions},
   journal={SIAM J. Math. Anal.},
   volume={56},
   date={2024},
   number={6},
   pages={6995--7013},
   issn={0036-1410},
   review={\MR{4816598}},
   doi={10.1137/24M1632577},
}

\bib{DDP24}{article}{
    author={Davoli,  Elisa},
    author={Di Fratta,  Giovanni},
    author={Pagliari,  Valerio},
   title={Sharp conditions for the validity of the Bourgain-Brezis-Mironescu
formula},
   journal={Proc. Roy. Soc. Edinburgh Sect. A},
   doi={10.1017/prm.2024.47},
   date={2024},
   pages={1–-24},
}

\bib{DL21}{article}{
   author={De Rosa, Antonio},
   author={La Manna, Domenico Angelo},
   title={A non local approximation of the Gaussian perimeter: gamma
   convergence and isoperimetric properties},
   journal={Commun. Pure Appl. Anal.},
   volume={20},
   date={2021},
   number={5},
   pages={2101--2116},
   issn={1534-0392},
   review={\MR{4259641}},
   doi={10.3934/cpaa.2021059},
}

\bib{DM14}{article}{
   author={Degiovanni, Marco},
   author={Marzocchi, Marco},
   title={Limit of minimax values under $\Gamma$-convergence},
   journal={Electron. J. Differential Equations},
   date={2014},
   pages={No. 266, 19},
   review={\MR{3312153}},
}

\bib{FPSS24}{article}{
   author={Franceschi, Valentina},
   author={Pinamonti, Andrea},
   author={Saracco, Giorgio},
   author={Stefani, Giorgio},
   title={The Cheeger problem in abstract measure spaces},
   journal={J. Lond. Math. Soc. (2)},
   volume={109},
   date={2024},
   number={1},
   pages={Paper No. e12840, 55},
   issn={0024-6107},
   review={\MR{4754430}},
   doi={10.1112/jlms.12840},
}

\bib{F25}{article}{
   author={Foghem, Guy},
   title={Stability of complement value problems for $p$-L\'evy operators},
   journal={NoDEA Nonlinear Differential Equations Appl.},
   volume={32},
   date={2025},
   number={1},
   pages={Paper No. 1, 106},
   issn={1021-9722},
   review={\MR{4822071}},
   doi={10.1007/s00030-024-01006-6},
}

\bib{GS25}{article}{
   author={Gennaioli, Luca},
   author={Stefani, Giorgio},
   title={Sharp conditions for the BBM formula and asymptotics of heat
content-type energies},
   date={2025},
   status={arXiv preprint},
   doi={10.48550/arXiv.2502.14655},
}

\bib{K24}{article}{
   author={Kijaczko, M.},
   title={Asymptotics of weighted Gagliardo seminorms},
   date={2024},
   status={arXiv preprint},
   doi={10.48550/arXiv.2305.14183},
}

\bib{KSS25}{article}{
   author={Kubin, Andrea},
   author={Saracco, Giorgio},
   author={Stefani, Giorgio},
   title={On the $\Gamma$-limit of weighted fractional energies},
   date={2025},
   status={arXiv preprint},
   doi={10.48550/arXiv.2503.19875},
}

\bib{P04a}{article}{
   author={Ponce, Augusto C.},
   title={A new approach to Sobolev spaces and connections to
   $\Gamma$-convergence},
   journal={Calc. Var. Partial Differential Equations},
   volume={19},
   date={2004},
   number={3},
   pages={229--255},
   issn={0944-2669},
   review={\MR{2033060}},
   doi={10.1007/s00526-003-0195-z},
}

\bib{P04b}{article}{
   author={Ponce, Augusto C.},
   title={An estimate in the spirit of Poincar\'e's inequality},
   journal={J. Eur. Math. Soc. (JEMS)},
   volume={6},
   date={2004},
   number={1},
   pages={1--15},
   issn={1435-9855},
   review={\MR{2041005}},
}

\bib{SS24}{article}{
   author={Saracco, Giorgio},
   author={Stefani, Giorgio},
   title={On the $N$-Cheeger problem for component-wise increasing norms},
   journal={J. Math. Pures Appl. (9)},
   volume={189},
   date={2024},
   pages={Paper No. 103593, 35},
   issn={0021-7824},
   review={\MR{4779392}},
   doi={10.1016/j.matpur.2024.06.008},
}

\end{biblist}
\end{bibdiv}

\end{document}